\newcommand*\patchAmsMathEnvironmentForLineno[1]{%
  \expandafter\let\csname old#1\expandafter\endcsname\csname #1\endcsname
  \expandafter\let\csname oldend#1\expandafter\endcsname\csname end#1\endcsname
  \renewenvironment{#1}%
     {\linenomath\csname old#1\endcsname}%
     {\csname oldend#1\endcsname\endlinenomath}}%
\newcommand*\patchBothAmsMathEnvironmentsForLineno[1]{%
  \patchAmsMathEnvironmentForLineno{#1}%
  \patchAmsMathEnvironmentForLineno{#1*}}%
\renewcommand{\subsectionmark}[1]{}
\newenvironment{enumerateroman}{
\begin{enumerate}[label=(\roman*), leftmargin=0pt,labelindent=2em,itemindent=!]
}{
\end{enumerate}
}
\newenvironment{enumeratearabic*}{
\begin{enumerate*}[label=(\arabic*)] %
}{
\end{enumerate*}
}
\newenvironment{enumerateroman*}{
\begin{enumerate*}[label=(\roman*)] %
}{
\end{enumerate*}
}
\numberwithin{equation}{section}
\newtheorem{theoremcounter}{theoremcounter}[section]
\newtheorem{maintheoremcounter}{maintheoremcounter}
\theoremstyle{plain}
\newtheorem{corollary}[theoremcounter]{Corollary}
\newtheorem{lemma}[theoremcounter]{Lemma}
\newtheorem{proposition}[theoremcounter]{Proposition}
\newtheorem{theorem}[theoremcounter]{Theorem}
\theoremstyle{plain}
\newtheorem{maintheorem}[maintheoremcounter]{Theorem}
\theoremstyle{definition}
\theoremstyle{remark}
\newtheorem{remark}[theoremcounter]{Remark}
\theoremstyle{nonumberremark}
\newtheorem{remarkcomputation}{Computation}
\newenvironment{mainremarkenumerate}
{%
\mainremark
\enumeratearabic
}{%
\endenumeratearabic
\endmainremark
}%
\let\cal\undefined
\newcommand{\tx}{\ensuremath{\text}}
\newcommand{\tbf}{\bfseries}
\newcommand{\thdash}{\nbd th}
\newcommand{\nbd}{\nobreakdash-\hspace{0pt}}
\newcommand{\bboard}{\ensuremath{\mathbb}}
\newcommand{\cal}{\ensuremath{\mathcal}}
\newcommand{\bbM}{\ensuremath{\bboard M}}
\newcommand{\cO}{\ensuremath{\cal{O}}}
\newcommand{\rmd}{\ensuremath{\mathrm{d}}}
\newcommand{\rmF}{\ensuremath{\mathrm{F}}}
\newcommand{\rmM}{\ensuremath{\mathrm{M}}}
\newcommand{\rmU}{\ensuremath{\mathrm{U}}}
\newcommand{\td}{\tilde}
\newcommand{\ov}{\overline}
\newcommand{\ra}{\ensuremath{\rightarrow}}
\newcommand{\mto}{\ensuremath{\mapsto}}
\newcommand{\amid}{\ensuremath{\mathop{\mid}}}
\newcommand{\ZZ}{\ensuremath{\mathbb{Z}}}
\newcommand{\QQ}{\ensuremath{\mathbb{Q}}}
\newcommand{\CC}{\ensuremath{\mathbb{C}}}
\renewcommand{\Re}{\ensuremath{\mathrm{Re}}}
\renewcommand{\Im}{\ensuremath{\mathrm{Im}}}
\newcommand{\isdiv}{\amid}
\newcommand{\nisdiv}{\ensuremath{\mathop{\nmid}}}
\renewcommand{\pmod}[1]{\ensuremath{\;(\mathrm{mod}\, #1)}}
\newcommand{\SL}[1]{\ensuremath{\mathrm{SL}_{#1}}}
\newcommand{\HS}{\mathbb{H}}
\newcommand{\ga}{\gamma}
\newcommand{\Ga}{\Gamma}
\newcommand{\hol}{\mathrm{hol}}
\newcommand{\ordp}{\operatorname{ord}_p}
\newcommand{\headertitle}{{\normalfont%
  Congruences for Hurwitz Class Numbers
}}
\newcommand{\headerauthors}{%
  O.~Beckwith, M.~Raum, O.~K.~Richter
}
\title{%
  Non-Holomorphic\\Ramanujan-type Congruences for\\Hurwitz Class Numbers
}
\author{%
Olivia Beckwith%
\and
Martin Raum%
\thanks{The author was partially supported by Vetenskapsr\aa det Grants~2015-04139 and~2019-03551.}%
\and
Olav K. Richter
\thanks{The author was partially supported by Simons Foundation Grant~\#412655.}
}
\begin{document}

\thispagestyle{scrplain}
\begingroup
\deffootnote[1em]{1.5em}{1em}{\thefootnotemark}
\maketitle
\endgroup

{%
\noindent
{\tbf Abstract:}
In contrast to all other known Ramanujan-type congruences, we discover that Ra\-ma\-nu\-jan-type congruences for Hurwitz class numbers can be supported on non-holo\-morphic generating series. We establish a divisibility result for such non-holomorphic congruences of Hurwitz class numbers. The two keys tools in our proof are the holomorphic projection of products of theta series with a Hurwitz class number generating series and a theorem by Serre, which allows us to rule out certain congruences.
\\[.35em]
\textsf{\textbf{%
  Hurwitz class numbers%
}}%
\hspace{0.3em}{\tiny$\blacksquare$}\hspace{0.3em}%
\textsf{\textbf{%
  Ramanujan-type congruences%
}}%
\hspace{0.3em}{\tiny$\blacksquare$}\hspace{0.3em}%
\textsf{\textbf{%
  holomorphic projection%
}}%
\hspace{0.3em}{\tiny$\blacksquare$}\hspace{0.3em}%
\textsf{\textbf{%
  Che\-bo\-ta\-rev density theorem%
}}
\\[0.15em]
\noindent
\textsf{\textbf{%
  MSC Primary:
  11E41%
}}%
\hspace{0.3em}{\tiny$\blacksquare$}\hspace{0.3em}%
\textsf{\textbf{%
  MSC Secondary:
  11F33, 11F37
}}
}
\vspace{1\baselineskip}

\Needspace*{4em}
\addcontentsline{toc}{section}{Introduction}
\markright{Introduction}
\lettrine[lines=2,nindent=.2em]{\tbf T}{he} study of class numbers for imaginary quadratic fields and the related Hurwitz class numbers has a long and rich history. Their divisibility properties were first studied as early as the 1930s~\cite{scholz-1932}, but have proved highly elusive. Such divisibility properties are directly reflected in the existence of torsion elements in class groups. The Cohen-Lenstra heuristic~\cite{cohen-lenstra-1984} has been the guiding principle in the topic, providing conjectures of a statistical nature for the factorization of class numbers; see~\cite{holmin-jones-kurlberg-mcleman-petersen-2019} for an experimentally supported refinement. However, essentially nothing is known about strictly regular patterns of divisibility as opposed to statistical patterns. In this light, it is natural to study Ramanujan-type congruences, i.e., divisibility properties on arithmetic progressions.

The study of congruences of modular forms originates with the Ramanujan congruences~\cite{ramanujan-1920} for the partition function:
\begin{gather*}
  p(5 n + 4) \equiv 0 \;\pmod{5}
\tx{,}\quad
  p(7 n + 5) \equiv 0 \;\pmod{7}
\tx{,}\quad
  p(11 n + 6) \equiv 0 \;\pmod{11}
\tx{.}
\end{gather*}
It is now known that all weakly holomorphic modular forms, including the generating function for~$p(n)$, satisfy congruences~\cite{ono-2000,ahlgren-ono-2001,treneer-2006}, which arise from the theory of Galois representations associated to modular forms. Zagier~\cite{zagier-1975} showed that the Hurwitz class numbers are Fourier coefficients of a weight~$\frac{3}{2}$ mock modular form (in today's terminology), i.e., the holomorphic part of a harmonic Maass form~\cite{ono-2004,ono-2009,duke-2014}. Much less is known about congruences in this setting. However, congruences for other weight~$\frac{3}{2}$ mock modular forms have been studied by several authors~\cite{bringmann-lovejoy-2009, ono-2011, ahlgren-bringmann-lovejoy-2011,andersen-2014,andrews-passary-sellers-2017}.

Ramanujan-type congruences for the Hurwitz class numbers~$H(D)$ have not appeared in the literature. For this work, we have employed a computer search to discover many examples of such congruences for $H(D)$, which can then be confirmed with our method of Section~\ref{sec:Proof of Main Theorem} in combination with Sturm bounds for modular forms modulo a prime; see also Remark~\ref{mainrm:it:hurwitz-congruences:cross-check} following Theorem~\ref{mainthm:hurwitz-congruences}. For instance one finds that:
\begin{gather*}
  H(5^3 n + 25) \equiv 0 \;\pmod{5}
\tx{,}\quad
  H(7^3 n + 147) \equiv 0 \;\pmod{7}
\tx{,}\quad
  H(11^3 n + 242) \equiv 0 \;\pmod{11}
\tx{.}
\end{gather*}
A common theme of all three of these congruences, written in the form~$H(an + b) \equiv 0\, \pmod{\ell}$, is that~$-b$ is a square modulo~$a$, which yields a generating series for~$H(an + b)$ that is mock modular, i.e., has a non-holomorphic modular completion. This differs from the congruences for mock theta functions of weight~$\frac{1}{2}$, which are so far only known to occur when the generating function is a holomorphic modular form~\cite{andersen-2014}. Our main theorem provides the following divisibility result for such non-holomorphic congruences:
\begin{maintheorem}
\label{mainthm:hurwitz-congruences}
Fix a prime~$\ell > 3$, $a \in \ZZ_{\ge 1}$, and $b \in \ZZ$. If $-b$ is a square modulo~$a$ and
\begin{gather*}
  H(a n + b) \equiv 0 \;\pmod{\ell}
\end{gather*}
for all integers~$n$, then~$\ell \isdiv a$.
\end{maintheorem}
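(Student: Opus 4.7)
Define the sieved generating series
\[
  F_{a,b}(\tau) \;=\; \sum_{n \geq 0} H(an+b)\, q^n.
\]
The hypothesis that all coefficients $H(an+b) \equiv 0 \pmod{\ell}$ says precisely that $F_{a,b} \equiv 0 \pmod{\ell}$ as a $q$-series. The non-holomorphic completion of Zagier's mock modular form $\sum_D H(D)\,q^D$ is supported on the exponents $-f^2$, $f \in \ZZ$, so the sieve yields a \emph{genuine} mock modular form of weight $\tfrac{3}{2}$ precisely when $-b$ is a square modulo $a$: under this hypothesis, the shadow of the non-holomorphic completion $\widehat{F}_{a,b}$ is a nonzero unary theta series.

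The first main step is to convert $\widehat{F}_{a,b}$ into a bona fide holomorphic modular form by pairing it with a weight-$\tfrac{1}{2}$ theta series $\theta$ and applying holomorphic projection:
\[
  G_{a,b,\theta}(\tau) \;=\; \pi_{\hol}\bigl(\widehat{F}_{a,b}(\tau)\, \theta(\tau)\bigr).
\]
Then $G_{a,b,\theta}$ is a holomorphic modular form of weight $2$ on a congruence subgroup $\Gamma_0(N)$ with a Nebentypus character. A Mertens-style evaluation of the holomorphic projection expresses the Fourier coefficients of $G_{a,b,\theta}$ as the sum of (a) a convolution of the values $H(am+b)$ against the Fourier coefficients of $\theta$, and (b) an explicit \emph{shadow correction} of divisor-sum type, contributed by the non-holomorphic part of $\widehat{F}_{a,b}$.

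Assume for contradiction that $\ell \nmid a$. Under the congruence hypothesis, the convolution term (a) vanishes mod $\ell$, so $G_{a,b,\theta} \pmod{\ell}$ coincides with the reduction of the shadow correction. Because $G_{a,b,\theta}$ is a genuine holomorphic modular form, the mod $\ell$ reduction of this explicit divisor-sum expression must itself arise as the reduction of a weight $2$ modular form on $\Gamma_0(N)$ with the prescribed Nebentypus. Here I appeal to the cited theorem of Serre on mod $\ell$ Galois representations attached to weight $2$ forms, combined with the Chebotarev density theorem: inspecting Hecke eigenvalues at primes in appropriate residue classes shows that, for a judicious choice of $\theta$, the shadow correction carries a prime-indexed coefficient pattern which cannot be matched by any mod $\ell$ weight $2$ form when $\ell \nmid a$, yielding the desired contradiction and hence $\ell \isdiv a$.

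The main obstacle that I anticipate is the second step: producing the shadow correction in closed form after holomorphic projection, and controlling it sharply enough as a function of $a$, $b$, and $\theta$ that its mod $\ell$ behaviour can be matched cleanly against the rigidity of mod $\ell$ weight $2$ modular forms. A secondary subtlety lies in the choice of the auxiliary theta series $\theta$: it must be flexible enough to separate the genuine non-holomorphic contribution from spurious cancellations modulo $\ell$, while keeping the level $N$ --- and in particular the role of $\ell$ relative to $N$ --- under control. Once the shadow correction is explicit and the Galois-theoretic input from Serre is available, the final density argument should be comparatively routine.
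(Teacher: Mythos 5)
Your strategy is the paper's: sieve the completed Zagier Eisenstein series, multiply by a weight-$\frac{1}{2}$ theta series, holomorphically project, use the congruence hypothesis to reduce everything mod~$\ell$ to the contribution of the non-holomorphic part, and play the resulting divisor-sum coefficients against Serre's theorem. But the plan has genuine gaps at exactly the places you flag as ``anticipated obstacles,'' plus one inaccuracy. First, in weight~$2$ the holomorphic projection of a function that merely transforms like a modular form is only a \emph{quasi-modular} form, not a holomorphic modular form on $\Gamma_0(N)$ as you assert; Serre's theorem does not apply directly, and the paper needs a separate corollary (multiplying by $E_{\ell-1}^d$ and replacing $E_2$ by $E_{\ell+1}$ modulo~$\ell$) to transfer the congruence $c(f;np^r)\equiv (r+1)\,c(f;n) \pmod{\ell}$ to quasi-modular forms.

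Second, the ``shadow correction in closed form'' is not routine: the divisor sum coming from $\pi^\hol_2\bigl((\rmU_{a,b}\,\theta^\ast)\cdot(\theta_{a,\beta}+\theta_{a,-\beta})\bigr)$ at index~$n$ runs over factorizations $an=d_1d_2$ in which the congruence conditions $d_1\equiv \beta+\td\beta$, $d_2\equiv\beta-\td\beta \pmod{a}$ are \emph{coupled} to the archimedean condition $d_i<d_{3-i}$. To decouple them, the paper must (i) first pass to a subprogression $(aa',\,b+ab')$ satisfying the valuation and non-representability conditions of Lemma~\ref{la:subprogression}, and (ii) evaluate the coefficient only at indices of the special form $n_a pq$ and $n_a p^2q$ with $p,q\equiv 1\pmod{a}$ and $p^2>(an_a)q>(an_a)^2p$, so that the sum factors into a nonarchimedean factor $\equiv 2\pmod{\ell}$ (Lemma~\ref{la:nonarchimedian-contribution}) times the elementary divisor sums $2(1+q)$ and $2(1+q+p)$. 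Your sketch offers no mechanism for this decoupling. Finally, the contradiction itself: Serre gives $c(n_aqp)\equiv 2c(n_aq)$ and $c(n_aqp^2)\equiv 3c(n_aq)$, which against the explicit values $-2(1+q)$ and $-2(1+q+p)$ forces $q\equiv 1\pmod{\ell}$; the contradiction comes from choosing $q\equiv 1\pmod{a}$ with $q\not\equiv 1\pmod{\ell}$, and such a~$q$ exists precisely because $\ell\nmid a$. Your proposal never identifies where the hypothesis $\ell\nmid a$ actually enters, and without that the argument cannot close.
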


\begin{mainremarkenumerate}
\item
\label{mainrm:it:data}
We used the Hurwitz-Eichler relations to compute Hurwitz class numbers~$H(D)$ for~$D < 3\cdot10^9$. We did not employ any computer algebra system, but implemented our software in C/C++/Julia from scratch, relying on the FLINT library~\cite{flint-2.5.2} only for modular arithmetics. The code is available at the second author's homepage.

\item
\label{mainrm:it:hurwitz-congruences:cross-check}
All examples of Ramanujan-type congruences of Hurwitz class numbers that we discovered, including the non-holomorphic ones, can be explained by a Hecke-type factorization of Hurwitz class numbers~\eqref{eq:class-number-formula}. Note that all such non-holomorphic Ramanujan-type congruences for~$H(n)$ satisfy the conclusion of Theorem~\ref{mainthm:hurwitz-congruences}.  While it is not known whether this factorization implies all non-holomorphic Ra\-ma\-nu\-jan-type congruences, our experimental data suggests that it does. Thus, Theorem~\ref{mainthm:hurwitz-congruences} provides evidence for this speculation.

\item
In the Appendix~\ref{sec:class-number-formula} we point out that equation ~\eqref{eq:class-number-formula} also implies congruences modulo powers of~$\ell$.

\item
There are also examples of congruences~$H(a n + b) \equiv 0 \,\pmod{\ell}$ where $-b$ is \emph{not} a square modulo~$a$. However, our computational data reveals that in those cases~$\ell$ does \emph{not} divide~$a$, which is in contrast to other known Ramanujan-type congruences (see~\cite{radu-2013,andersen-2014}). The first examples for $\ell \in \{5, 7, 11\}$ are:
\begin{gather*}
  H(3^3 n + 9) \equiv 0 \;\pmod{5}
\tx{,}\quad
  H(5^3 n + 50) \equiv 0 \;\pmod{7}
\tx{,}\quad
  H(2^9 n + 192) \equiv 0 \;\pmod{11}
\tx{.}
\end{gather*}

\item
The methods in this work likely generalize to all mock modular forms of \linebreak weight~$\frac{3}{2}$. In particular, congruences of the Andrews~$\mathrm{spt}$-function~\cite{ono-2011} should be subject to requirements similar to the ones presented in Theorem~\ref{mainthm:hurwitz-congruences}. On the other hand, both the divisibility and the square class conditions that appear in the case of mock modular forms of weight~$\frac{1}{2}$, see~\cite{andersen-2014}, seem to be of a different nature and originate in the principal part of mock modular forms.
\end{mainremarkenumerate}

The method of our proof is novel: We combine a holomorphic projection argument for products of theta series and mock modular forms, which appeared first in~\cite{imamoglu-raum-richter-2014}, with a theorem by Serre~\cite{serre-1974,serre-1976} that is rooted in the Chebotarev Density Theorem. The latter was employed by Ono~\cite{ono-2000} in order to establish his celebrated results on the distribution of the partition function modulo primes. Ono applied it to establish congruences, while our proof proceeds by contradiction and uses Serre's theorem to rule out certain congruences.

\subsection*{Acknowledgments}

The authors thank Scott Ahlgren, Jeremy Lovejoy, and the \linebreak anonymous referee for valuable suggestions.

\section{Preliminaries}

A recent reference which contains most of the necessary background material for this paper is~\cite{bringmann-folsom-ono-rolen-2018}, a more classical one on the theory of modular forms is~\cite{lang-1995}.

\subsection{Modular forms}

Let~$\Ga_0(N)$, $\Ga_1(N)$, and $\Ga(N)$ be the usual congruence sub-\linebreak groups of~$\SL{2}(\ZZ)$, $\rmM_k(\Ga)$ the space of modular forms of integral or half-integral \linebreak weight~$k$ for~$\Ga \subseteq \SL{2}(\ZZ)$, and $\bbM_k(\Ga)$ the corresponding space of harmonic Maass forms (satisfying the moderate growth condition at all cusps). We also consider \linebreak quasi-modular forms of integral weight~\cite{zagier-1994,kaneko-zagier-1995}. Moreover, $\HS$ denotes the Poincar\'e upper half plane, and throughout $\tau\in\HS$, $y=\Im(\tau)$, and $e(s\tau) := \exp(2 \pi i\, s\tau)$ for $s\in\QQ$.

For a holomorphic modular form~$f(\tau)=\sum_{m\geq 0}c(f,m)e(m\tau) \in \rmM_{2-k}(\Ga(N))$ with \linebreak $k \ne 1$ and $N \in \ZZ_{\ge 1}$, we recall its non-holomorphic Eichler integral
\begin{gather}
\label{eq:def:non-holomorphic-eichler-integral}
\begin{aligned}
  f^\ast(\tau)
\;&:=\;
  -(2 i)^{k-1}
  \int_{-\ov{\tau}}^{i\infty} \frac{\ov{G(-\ov{w})}}{(w + \tau)^k}\,d\!w
\\
&\hphantom{:}=
  \frac{\ov{c(f,0)}}{1-k}\, y^{1-k}
  \,-\,
  (4 \pi)^{k-1}
  \sum_{m < 0}
  \ov{c(f, |m|)\, }|m|^{k-1}
  \Gamma(1-k,4 \pi |m| y) e(m\tau)
\tx{,}
\end{aligned}
\end{gather}
where $\Ga$ stands for the upper incomplete Gamma-function.

\subsection{Generating series of Hurwitz class numbers}

Zagier~\cite{zagier-1975} showed that the holomorphic generating series~$\sum_D H(D) e(D \tau)$ of Hurwitz class numbers admits the following modular completion:
\begin{gather}
\label{eq:zagier-eisenstein-series}
  E_{\frac{3}{2}}(\tau)
\;:=\;
  \sum_{D = 0}^\infty H(D) e(D \tau)
\,+\,
  \frac{1}{16 \pi} \theta^\ast(\tau)
\;\in\;
  \bbM_{\frac{3}{2}}(\Gamma_0(4))
\tx{,}
\end{gather}
where
\begin{gather}
\label{eq:theta-modularity}
\begin{aligned}
  \theta
&\;:=\;
  \theta_{1,0}
\,\in\,
  \rmM_{\frac{1}{2}}(\Ga_0(4))
\tx{\, with}
\\
  \theta_{a,b}(\tau)
&\;:=\;
  \hspace{-1em}
  \sum_{\substack{n \in \ZZ\\n \equiv b \,\pmod{a}}}
  e\big( \tfrac{n^2 \tau}{a} \big)
\,\in\,
  \rmM_{\frac{1}{2}}(\Ga(4 a))
\tx{,}\quad
  a \in \ZZ_{\ge 1}, b \in \ZZ
\tx{.}
\end{aligned}
\end{gather}

For $a \in \ZZ_{\ge 1}$ and $b \in \ZZ$, we define the following operator on Fourier series expansions of non-holomorphic modular forms:
\begin{gather}
  \rmU_{a,b}\,\sum_{n \in \ZZ} c(f;\,n;\,y) e(n \tau)
\;:=\;
  \sum_{\substack{n \in \ZZ\\n \equiv b \,\pmod{a}}}
  c\big( f;\,n;\,\tfrac{y}{a} \big)
  e\big( \tfrac{n \tau}{a} \big)
\tx{.}
\end{gather}
The holomorphic part of $\rmU_{a,b}\, E_{\frac{3}{2}}(\tau)$ is the generating series of Hurwitz class numbers~$H(a n + b)$ for~$n \in \ZZ$.  A Hecke-theory-like computation (see also~\cite{cohen-1975,jochnowitz-2004-preprint}) shows that 
\begin{gather}
\label{eq:eisenstein-ab-modularity}
  \rmU_{a,b}\, E_{\frac{3}{2}}
\,\in\,
  \rmM_{\frac{3}{2}}(\Ga(4 a))
\tx{.}
\end{gather}
Moreover, we have
\begin{gather}
\label{eq:Uab-theta}
  \rmU_{a,b}\,\theta
\;=\;
  \sum_{\beta^2 \equiv b \,\pmod{a}}
  \theta_{a,\beta}
\quad\tx{and}\quad
  \rmU_{a,b}\,\theta^\ast
\;=\;
  \sum_{\beta^2 \equiv -b \,\pmod{a}}
  \sqrt{a}\, \theta^\ast_{a, \beta}
\tx{.}
\end{gather}
In particular, if~$-b$ is not a square modulo~$a$, then $\rmU_{a,b}\,E_{3 \slash 2}$ is a holomorphic modular form.

\subsection{A theorem by Serre}

The following theorem by Serre and its corollary allow us to disprove that a given generating series is a quasi-modular form modulo a prime. Recall that a rational number is called $\ell$-integral for a prime~$\ell$, if its denominator is not divisible by~$\ell$.

\begin{theorem}[{Deligne-Serre~\cite{deligne-serre-1974} and Serre~\cite{serre-1974,serre-1976}}]
\label{thm:serre}
Fix an odd prime~$\ell$ and \linebreak $k, N \in \ZZ_{\ge 1}$. Then there are infinitely many primes~$p \equiv 1 \,\pmod{\ell N}$ such that for every~$f \in \rmM_k(\Ga_1(N))$ with $\ell$-integral Fourier coefficients, we have
\begin{gather}
\label{eq:thm:serre}
  c(f;\, n p^r)
\;\equiv\;
  (r + 1) c(f;\, n)
  \;\pmod{\ell}
\end{gather}
for all $n \in \ZZ$ co-prime to~$p$ and all~$r \in \ZZ_{\ge 0}$.
\end{theorem}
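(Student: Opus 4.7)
The plan is to reduce the theorem to producing infinitely many primes $p \equiv 1 \pmod{\ell N}$ satisfying $T_p \equiv 2\,\id \pmod{\ell}$ on the $\ZZ_{(\ell)}$-lattice of $\ell$-integral forms inside $\rmM_k(\Ga_1(N))$. Granting this, the case of general $r \in \ZZ_{\ge 0}$ follows by induction from the standard Hecke recursion $T_{p^{r+1}} = T_p T_{p^r} - p^{k-1} \langle p \rangle T_{p^{r-1}}$: the hypothesis $p \equiv 1 \pmod{\ell N}$ forces $p^{k-1} \equiv 1$ and $\langle p \rangle \equiv \id \pmod{\ell}$, so the recursion collapses to $T_{p^{r+1}} \equiv 2 T_{p^r} - T_{p^{r-1}} \pmod{\ell}$ with $T_1 = \id$ and $T_p \equiv 2\,\id$, yielding $T_{p^r} \equiv (r+1)\,\id \pmod{\ell}$. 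Extracting the $n$-th Fourier coefficient and invoking $c(T_{p^r} f;\, n) = c(f;\, n p^r)$ for $\gcd(n, p) = 1$ then gives the conclusion.

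To produce such primes I would invoke mod-$\ell$ Galois representations attached to systems of Hecke eigenvalues in $\rmM_k(\Ga_1(N))$. The Hecke algebra acting on the $\ZZ_{(\ell)}$-lattice is a finite commutative $\ZZ_{(\ell)}$-algebra, so only finitely many systems of Hecke eigenvalues appear modulo $\ell$. To each such system (cuspidal or Eisenstein) Deligne-Serre attach a continuous semisimple representation $\bar{\rho}\colon \mathrm{Gal}(\overline{\QQ}/\QQ) \to \GL{2}(\overline{\bbF}_\ell)$ satisfying $\tr(\bar{\rho}(\mathrm{Frob}_p)) \equiv a_p$ and $\det(\bar{\rho}(\mathrm{Frob}_p)) \equiv p^{k-1} \chi(p) \pmod{\ell}$ for $p \nmid \ell N$. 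The direct sum of all such representations has finite image cut out by a finite Galois extension $K / \QQ$, which I enlarge to contain $\QQ(\zeta_{\ell N})$. Chebotarev's density theorem then produces infinitely many primes $p$ with $\mathrm{Frob}_p$ trivial in $\mathrm{Gal}(K/\QQ)$; each such $p$ satisfies $p \equiv 1 \pmod{\ell N}$ together with $a_p(g) \equiv \tr(\id) = 2 \pmod{\frakl}$ for every eigenform $g$, where $\frakl$ is a prime above $\ell$. Decomposing an arbitrary $\ell$-integral $f$ as a combination of eigenforms over $\overline{\QQ}_\ell$, the difference $c(T_{p^r}f;\, n) - (r+1)\, c(f;\, n)$ lies in $\frakl \cdot \overline{\ZZ}_\ell \cap \ZZ_{(\ell)} = \ell \ZZ_{(\ell)}$, so the congruence descends to the full lattice.

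The main obstacle, and the deep input here, is the Galois-theoretic machinery itself: assembling a single finite Galois quotient of $\mathrm{Gal}(\overline{\QQ}/\QQ)$ that simultaneously controls the mod-$\ell$ Hecke eigenvalues of all systems appearing in $\rmM_k(\Ga_1(N))$, including the Eisenstein systems where $\bar{\rho}$ is reducible and where the construction requires more care than the cuspidal case. Everything else consists of a Chebotarev extraction together with a formal induction on the Hecke recursion.
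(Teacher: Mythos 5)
Your architecture---reduce to $T_p \equiv 2\,\id$ on the lattice, propagate to all $r$ via the Hecke recursion $T_{p^{r+1}} = T_pT_{p^r} - p^{k-1}\langle p\rangle T_{p^{r-1}}$, attach mod-$\ell$ Galois representations to the finitely many residual eigenvalue systems, and extract primes with trivial Frobenius by Chebotarev---is precisely the Deligne--Serre argument (Lemma~9.6 of~\cite{deligne-serre-1974}) to which the paper's one-line proof defers, so in that sense you are on the paper's route. The recursion step and the Chebotarev extraction are fine.

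The genuine gap is the final descent from eigenforms to the lattice. Writing an $\ell$-integral $f$ as $\sum_i c_i g_i$ with $g_i$ eigenforms does \emph{not} put the $c_i$ in $\overline{\ZZ}_\ell$: congruences between eigenforms force $\ell$ into the denominators, and then $a_{p^r}(g_i) \equiv r+1 \pmod{\frakl}$ is not enough to conclude. Concretely, take $\ell = 691$, $N = 1$, $k = 12$. The form $g = \big(\tfrac{691}{65520}E_{12} - \Delta\big)\slash 691$ is $691$-integral by Ramanujan's congruence $\tau(n) \equiv \sigma_{11}(n) \pmod{691}$, and for a prime $p$ with $\tau(p) \equiv \sigma_{11}(p) \equiv 2 \pmod{691}$ one finds
\begin{gather*}
  c(T_p\, g;\, n) - 2\, c(g;\, n)
  \;\equiv\;
  \frac{\sigma_{11}(p) - \tau(p)}{691}\, \tau(n)
  \;\pmod{691}
  \tx{,}
\end{gather*}
which vanishes only when $\sigma_{11}(p) \equiv \tau(p) \pmod{691^2}$---strictly more than your Frobenius condition guarantees. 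Equivalently, on the Hecke algebra acting on the lattice of $\ell$-integral forms your argument only shows that $T_p - 2$ lies in every maximal ideal above $\ell$, i.e.\ acts \emph{nilpotently} on $M\slash\ell M$, not that it vanishes. The standard repair is to run Chebotarev modulo a higher power of $\ell$: pick $m$ so that $\ell^{m-1}$ annihilates the quotient of the $\ell$-integral lattice by the direct sum of the eigen-lattices, trivialize $\mathrm{Frob}_p$ in the fields cut out by the attached representations reduced modulo $\ell^m$, and demand $p \equiv 1 \pmod{\ell^m N}$; then $a_{p^r}(g_i) \equiv r+1 \pmod{\ell^m}$ and the bounded denominators are absorbed. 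You should supply this (or an equivalent device) before the descent to the full lattice is legitimate.
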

\begin{proof}
The proof of Lemma~9.6 of~\cite{deligne-serre-1974}, which is stated in the special case of weight~$1$, extends verbatim.
\end{proof}

\begin{corollary}
\label{cor:serre}
Fix a prime~$\ell > 3$ and positive integers~$k$ and~$N$. Then there are infinitely many primes~$p \equiv 1 \,\pmod{\ell N}$ such that for every quasi-modular form~$f$ of weight~$k$ for~$\Ga_1(N)$ with $\ell$-integral Fourier coefficients, we have the congruence~\eqref{eq:thm:serre}.
\end{corollary}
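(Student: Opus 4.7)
The plan is to deduce Corollary~\ref{cor:serre} from Theorem~\ref{thm:serre} by reducing each quasi-modular form, modulo $\ell$, to a genuine holomorphic modular form of slightly higher weight. The key tool is the congruence $E_{\ell+1} \equiv E_2 \pmod{\ell}$ for $\ell \ge 5$ (a consequence of Kummer's congruences, Fermat's little theorem on $\sigma_{k-1}(n)$, and the von Staudt--Clausen theorem for the Bernoulli denominator), together with $E_{\ell-1} \equiv 1 \pmod{\ell}$.

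First, I would recall the structural fact from Kaneko--Zagier that every quasi-modular form $f$ of weight $k$ and depth~$d$ for $\Ga_1(N)$ admits a unique decomposition
\begin{gather*}
  f \;=\; \sum_{j=0}^{d} f_j E_2^j,
  \quad f_j \in \rmM_{k - 2j}(\Ga_1(N)),
\end{gather*}
with $d \le \lfloor k/2 \rfloor$. I would argue that if $f$ has $\ell$-integral Fourier coefficients, then each $f_j$ may be taken $\ell$-integral as well, which follows because $E_2$ itself has integral Fourier coefficients and the decomposition can be performed by successive applications of the Serre derivative / holomorphic projection, or simply by induction on depth after subtracting off the top term $f_d E_2^d$ (using that $f_d$ is recovered from $f$ by a procedure preserving $\ell$-integrality).

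Second, set $D := \lfloor k/2 \rfloor$ and $K := k + D(\ell-1)$. Using the two Eisenstein congruences above, I would replace each summand $f_j E_2^j$ by $f_j E_{\ell+1}^j E_{\ell-1}^{D-j}$, which is congruent to it modulo $\ell$ and which has weight $(k-2j) + j(\ell+1) + (D-j)(\ell-1) = K$, independently of $j$. Summing, I obtain a genuine modular form
\begin{gather*}
  \wtd f \;:=\; \sum_{j=0}^{D} f_j\, E_{\ell+1}^j\, E_{\ell-1}^{D-j} \;\in\; \rmM_{K}(\Ga_1(N))
\end{gather*}
with $\ell$-integral Fourier coefficients satisfying $f \equiv \wtd f \pmod{\ell}$. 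Crucially, the weight $K$ depends only on $k$ and $\ell$, not on the particular quasi-modular form $f$.

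Third, I would apply Theorem~\ref{thm:serre} with weight $K$ and level $N$. This provides infinitely many primes $p \equiv 1 \pmod{\ell N}$ such that the congruence~\eqref{eq:thm:serre} holds for every $\ell$-integral form in $\rmM_K(\Ga_1(N))$, and hence in particular for $\wtd f$. Since $f \equiv \wtd f \pmod{\ell}$ coefficient-wise, the same congruence transfers to $f$, yielding the corollary. The main obstacle I anticipate is establishing the $\ell$-integrality of the coefficients $f_j$ in Step~1; this is the only step requiring care, as the rest is formal manipulation once the two Eisenstein congruences are in hand.
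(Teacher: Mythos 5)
Your route is the same as the paper's: decompose $f=\sum_j f_jE_2^j$ with $f_j\in\rmM_{k-2j}(\Ga_1(N))$, use $E_{\ell-1}\equiv 1$ and $E_{\ell+1}\equiv E_2\pmod{\ell}$ to trade $f$ for the genuine modular form $\sum_j f_jE_{\ell+1}^jE_{\ell-1}^{D-j}$, and apply Theorem~\ref{thm:serre} in the new weight. (Your normalization $D=\lfloor k/2\rfloor$, which makes the target weight independent of the particular $f$, is a small tidiness gain over the paper, whose weight $k+d(\ell-1)$ depends on the depth of $f$.)

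The problem is your Step~1. You correctly identify the $\ell$-integrality of the components $f_j$ as the crux, but your justification does not hold up, and the claim as stated is false. The decomposition is unique ($E_2$ is transcendental over $\rmM_\ast(\Ga_1(N))$), so there is no freedom in how the $f_j$ "may be taken"; and they need not be $\ell$-integral even when $f$ is. Concretely, for $\ell=5$, $N=1$, $k=2$ take $f=(E_2E_4-E_6)/5$: by Ramanujan's identity $E_2E_4-E_6=3\,q\frac{d}{dq}E_4=720\sum_{n\ge1} n\sigma_3(n)e(n\tau)$, so $f$ has integer coefficients, yet $f_1=E_4/5$ and $f_0=-E_6/5$. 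For this $f$ the substitution produces $E_6\cdot E_4/5-E_4\cdot E_6/5=0$, which is \emph{not} congruent to $f$ modulo~$5$, so the reduction to a modular form genuinely breaks down; another weight-$2$ example is $E_2(\ell\tau)$ on $\Ga_0(\ell)$, whose $E_2$-component is $1/\ell$. You should know that the paper's own proof has exactly the same unacknowledged gap (it multiplies the congruence $E_2E_{\ell-1}\equiv E_{\ell+1}$ by the possibly non-$\ell$-integral $f_n$ without comment), so you deserve credit for noticing that something here requires proof --- but neither "subtracting the top term" nor an appeal to the Serre derivative supplies it, since extracting $f_d$ from $f$ does not preserve $\ell$-integrality of $q$-expansions. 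The corollary itself is true (one checks the examples above directly), but closing this step needs a different argument, e.g.\ handling the derivative part separately via the fact that $q\frac{d}{dq}$ of an $\ell$-integral modular form is $\ell$-integral and congruent modulo $\ell$ to an $\ell$-integral modular form of weight raised by $\ell+1$, rather than relying on the raw $E_2$-decomposition.
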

\begin{proof}
Recall the weight~$2$ quasi-modular form
\begin{gather*}
  E_2(\tau)
\;=\;
  1 - 24 \sum_{n = 1}^\infty \sigma_1(n) e(n\tau)
\tx{,}\quad
  \sigma_1(n)
\;:=\;
  \sum_{\substack{d \isdiv n\\d > 0}} d
\tx{.}
\end{gather*}
Quasi-modular forms are polynomials in~$E_2$ whose coefficients are modular forms. More specifically, a quasi-modular form of weight~$k$ for~$\Ga_1(N)$ can be written as
\begin{gather*}
  \sum_{n = 0}^d
  E_2^n f_n
\tx{,}\quad
  f_n
  \in
  \rmM_{k-2n}(\Ga_1(N))
\tx{.}
\end{gather*}
Recall also that the weight~$\ell - 1$ and level~$1$ Eisenstein series~$E_{\ell - 1}$ is a modular form (here we use that $\ell > 3$) that is congruent to~$1$ modulo~$\ell$, and that the weight~$\ell + 1$ and level~$1$ Eisenstein series~$E_{\ell + 1}$ is a modular form that is congruent to~$E_2$ modulo~$\ell$. Thus,
\begin{gather*}
  \sum_{n = 0}^d
  E_2^n f_n
\;\equiv\;
  E_{\ell - 1}^d
  \sum_{n = 0}^d
  E_2^n f_n
\;\equiv\;
  \sum_{n = 0}^d
  (E_2 E_{\ell - 1})^n
  E_{\ell - 1}^{d -n}
  f_n
\;\equiv\;
  \sum_{n = 0}^d
  E_{\ell + 1}^n
  E_{\ell - 1}^{d -n}
  f_n
  \;\pmod{\ell}
\tx{,}
\end{gather*}
which allows us to apply Theorem~\ref{thm:serre} to the modular form of weight~$k + d(\ell-1)$ on the right hand side.
\end{proof}

\subsection{Holomorphic projection}

We now revisit holomorphic projection, which allows one to map continuous functions with certain growth and modular behavior to holomorphic modular forms (for example, see~\cite{sturm-1980,gross-zagier-1986}).  It is convenient for us to refer to~\cite{imamoglu-raum-richter-2014} as a reference, since it provides a variant that does not project to cusp forms. Fix a weight~$k \in \ZZ$, $k \ge 2$ and a level~$N \in \ZZ_{\ge 1}$.

Consider an~$N$-periodic continuous function $f :\,\HS \ra \CC$ with Fourier series expansion
\begin{gather*}
  f(\tau)
\;=\;
  \sum_{n \in \frac{1}{N}\ZZ}
  c(f;\,n;\,y) e(\tau n)
\end{gather*}
subject to the conditions:
\begin{enumerateroman*}
\item
For some $a > 0$ and all~$\ga \in \SL{2}(\ZZ)$, there are coefficients $\td{c}(f|_k\,\ga;\,0) \in \CC$, such that $(f |_k\,\ga)(\tau) = \td{c}(f|_k\,\ga;\,0) + \cO(y^{-a})$ as $y \ra \infty$;

\item
For all $n \in \frac{1}{N}\ZZ_{> 0}$, we have $c(f;\, n;\, y) = \cO(y^{2-k})$ as $y \ra 0$.
\end{enumerateroman*}
The holomorphic projection operator of weight~$k$ is defined by
\begin{gather}
\label{eq:def:holomorphic-projection}
\begin{aligned}
  \pi^\hol_k(f)
\;&:=\;
  \td{c}(f;\,0)
  \,+\,
  \sum_{n \in \frac{1}{N}\ZZ_{>0}}
  c\big(\pi^\hol_k(f);\, n\big)\, e(n\tau)
\quad\tx{with}
\\
  c\big(\pi^\hol_k(f);\, n\big)
\;&:=\;
  \frac{(4 \pi n)^{k-1}}{\Gamma(k-1)}\,
  \lim_{s \ra 0}\,
  \int_0^\infty
  c(f;\, n;\, y) \exp(-4 \pi n y) y^{s+k-2}\,\rmd\!y
\tx{.}
\end{aligned}
\end{gather}
\begin{remark}
The formulation in~\cite{imamoglu-raum-richter-2014} is missing the limit~$s \ra 0$, which is required to ensure convergence in all situations. Nevertheless, the arguments in~\cite{imamoglu-raum-richter-2014} are still valid without any restrictions after adding that missing regularization.
\end{remark}

Proposition~4 and Theorem~5 of~\cite{imamoglu-raum-richter-2014} provide some key properties of the holomorphic projections operator (for vector-valued modular forms):  If $f$ is holomorphic, then $\pi^\hol_k(f) = f$.  Furthermore, if $f$ transforms like a modular form of weight~$2$ for the group~$\Ga_1(N)$, then~$\pi^\hol_2(f)$ is a quasi-modular form of weight~$2$ for $\Ga_1(N)$.

\section{Proof of Theorem~\ref{mainthm:hurwitz-congruences}}
\label{sec:Proof of Main Theorem}

We will apply holomorphic projection to products of holomorphic modular forms of weight~$\frac{1}{2}$ and harmonic Maass forms of weight~$\frac{3}{2}$. These products when inserted into~\eqref{eq:def:holomorphic-projection} lead to the integrals evaluated in the next two lemmas.

\begin{lemma}
\label{la:holomorphic-projection-constant-contribution}
Given~$n \in \QQ_{> 0}$, we have
\begin{gather*}
  \pi^\hol_2\Big(
  y^{-\frac{1}{2}}\, e(n \tau)
  \Big)
\;=\;
  2 \pi \sqrt{n}\, e(n \tau)
\tx{.}
\end{gather*}
\end{lemma}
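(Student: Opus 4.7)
The plan is to apply the definition \eqref{eq:def:holomorphic-projection} of $\pi^\hol_2$ directly to the input $f(\tau) = y^{-1/2} e(n\tau)$, which has only a single nonzero Fourier coefficient.

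First I would identify the Fourier data. Since $f(\tau) = y^{-1/2} e(n\tau)$, the only nonzero coefficient in the Fourier expansion in the $N$-periodic sense (taking any $N$ with $n \in \tfrac{1}{N}\ZZ$) is $c(f;\,n;\,y) = y^{-1/2}$, and the constant term $\td{c}(f;\,0)$ vanishes because $f = O(y^{-1/2})$ as $y \to \infty$. I should also briefly remark that the growth hypotheses~(i) and~(ii) preceding \eqref{eq:def:holomorphic-projection} are satisfied: $f$ decays like $y^{-1/2}$ at the cusp with $\td{c} = 0$, and for the unique positive frequency $c(f;\,n;\,y) = y^{-1/2} = O(y^{2-k}) = O(1)$ as $y \to 0$ with $k=2$, which is exactly the boundary case allowed.

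Next I would substitute into the integral formula with $k=2$:
\begin{gather*}
  c\big(\pi^\hol_2(f);\, n\big)
\;=\;
  \frac{4 \pi n}{\Gamma(1)}\,
  \lim_{s \ra 0}
  \int_0^\infty y^{s - \frac{1}{2}}\, \exp(-4\pi n y)\, \rmd\!y
\tx{.}
\end{gather*}
The integral is a standard Gamma integral: for $\Re(s) > -\tfrac{1}{2}$, one has $\int_0^\infty y^{s-1/2} e^{-4\pi n y}\,\rmd y = \Gamma(s + \tfrac{1}{2})(4 \pi n)^{-s-1/2}$. Since this expression is continuous at $s=0$, the regularizing limit simply evaluates to $\Gamma(\tfrac{1}{2})(4\pi n)^{-1/2} = \sqrt{\pi}\,/\,(2\sqrt{\pi n}) = 1\,/\,(2\sqrt{n})$.

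Combining, $c(\pi^\hol_2(f);\, n) = 4\pi n \cdot \tfrac{1}{2\sqrt{n}} = 2 \pi \sqrt{n}$, and since no other Fourier coefficient contributes and $\td{c}(f;\,0) = 0$, the claim $\pi^\hol_2(y^{-1/2} e(n\tau)) = 2 \pi \sqrt{n}\, e(n \tau)$ follows. There is no substantive obstacle here; the only subtle point worth flagging is that the $s$-regularization is unnecessary for this particular integrand because convergence at $s=0$ is already absolute, and this observation is what makes the lemma clean.
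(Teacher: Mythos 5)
Your proof is correct and follows essentially the same route as the paper: both plug $f(\tau)=y^{-1/2}e(n\tau)$ into the definition of $\pi^\hol_2$, note the integral converges at $s=0$ so the regularization is harmless, and evaluate the resulting Gamma integral to get $\frac{4\pi n}{\Gamma(1)}(4\pi n)^{-1/2}\Gamma(\tfrac{1}{2})=2\pi\sqrt{n}$. The only difference is that you additionally verify the growth hypotheses, which the paper leaves implicit.
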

\begin{proof}
We suppress the limit~$s \ra 0$ from our calculation, since the integral is convergent at~$s = 0$. We then have to evaluate
\begin{gather*}
  \frac{4 \pi n}{\Gamma(1)}
  \int_0^\infty
  y^{-\frac{1}{2}}\, \exp(- 4 \pi n y) \,\rmd\!y
=
  \frac{4 \pi n}{\Gamma(1)}
  (4 \pi n)^{-\frac{1}{2}} \Gamma(\tfrac{1}{2})
=
  2 \pi \sqrt{n}
\tx{.}
\end{gather*}
\end{proof}

\begin{lemma}
\label{la:holomorphic-projection-cuspidal-contribution}
Given rational numbers $n, \td{n} \in \QQ$ satisfying $\td{n} < 0$ and $n + \td{n} > 0$, we have
\begin{gather}
  \pi^\hol_2\Big(
    \Gamma\big(-\tfrac{1}{2},\, 4 \pi |\td{n}| y\big)
    e\big( (n + \td{n}) \tau \big)
  \Big)
=
  \frac{2 \sqrt{\pi} (n + \td{n})}{\sqrt{|\td{n}|} \,(\sqrt{n} + \sqrt{|\td{n}|})}\,
    e\big( (n + \td{n}) \tau \big)
\tx{.}
\end{gather}
\end{lemma}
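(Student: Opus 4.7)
The plan is to apply the definition~\eqref{eq:def:holomorphic-projection} directly. The integrand $f(\tau) = \Gamma(-\tfrac{1}{2}, 4\pi|\td{n}|y)\, e((n+\td{n})\tau)$ has only a single non-vanishing Fourier coefficient, namely at $n+\td{n} > 0$, with $c(f;\, n+\td{n};\, y) = \Gamma(-\tfrac{1}{2}, 4\pi|\td{n}|y)$. Since near $y = 0$ one has $\Gamma(-\tfrac{1}{2}, x) = O(x^{-1/2})$ and near infinity $\Gamma(-\tfrac{1}{2}, x)$ decays exponentially, the regularizing limit~$s \to 0$ is not actually required for convergence, and the whole computation reduces to evaluating
\begin{gather*}
  J \;:=\; \int_0^\infty \Gamma\bigl(-\tfrac{1}{2},\, 4\pi|\td{n}|y\bigr)\, e^{-4\pi(n+\td{n})y}\, \rmd\!y
\end{gather*}
and multiplying by $4\pi(n+\td{n})/\Gamma(1) = 4\pi(n+\td{n})$.

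The key step is the integration-by-parts identity $\Gamma(-\tfrac{1}{2}, x) = 2 x^{-1/2} e^{-x} - 2\, \Gamma(\tfrac{1}{2}, x)$, which expresses the problematic $\Gamma(-\tfrac{1}{2}, \cdot)$ in terms of elementary functions and $\Gamma(\tfrac{1}{2}, \cdot)$, whose Laplace transform is classical. Crucially, since $\td{n} < 0$ we have the cancellation $|\td{n}| + (n + \td{n}) = n$, so the first piece becomes the elementary Gamma integral
\begin{gather*}
  2\,(4\pi|\td{n}|)^{-1/2}\int_0^\infty y^{-1/2} e^{-4\pi n y}\,\rmd\!y \;=\; 2\,(4\pi|\td{n}|)^{-1/2}(4\pi n)^{-1/2} \sqrt{\pi}.
\end{gather*}
For the second piece, I would invoke the Laplace transform
\begin{gather*}
  \int_0^\infty e^{-p y}\, \Gamma(a, \alpha y)\, \rmd\!y \;=\; \frac{\Gamma(a)}{p}\Bigl[\,1 - \bigl(\tfrac{\alpha}{\alpha+p}\bigr)^{a}\,\Bigr],
\end{gather*}
which is proved in one line by swapping the order of integration in $\Gamma(a,\alpha y) = \int_{\alpha y}^\infty u^{a-1} e^{-u}\,\rmd\!u$; this is valid for $a = \tfrac{1}{2}$ with $p = 4\pi(n+\td{n})$ and $\alpha = 4\pi|\td{n}|$, and again uses $\alpha + p = 4\pi n$.

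The last step is algebraic cleanup. Combining the two contributions gives
\begin{gather*}
  J \;=\; \frac{1}{2\sqrt{\pi}\,\sqrt{n}}\Bigl[\,\tfrac{1}{\sqrt{|\td{n}|}} \,-\, \tfrac{1}{\sqrt{n} + \sqrt{|\td{n}|}}\,\Bigr],
\end{gather*}
where the factorization $n + \td{n} = (\sqrt{n} - \sqrt{|\td{n}|})(\sqrt{n} + \sqrt{|\td{n}|})$ collapses the difference to the single term $1 \big/ \bigl(2\sqrt{\pi}\,\sqrt{|\td{n}|}\,(\sqrt{n}+\sqrt{|\td{n}|})\bigr)$. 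Multiplying by $4\pi(n+\td{n})$ yields exactly the stated coefficient. The computation is routine; the only real obstacle is choosing the right reduction (the integration-by-parts identity for $\Gamma(-\tfrac{1}{2}, \cdot)$) that makes the cancellation $|\td{n}| + (n+\td{n}) = n$ visible and reduces everything to a single application of the standard Laplace transform formula.
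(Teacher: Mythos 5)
Your proof is correct, but it takes a genuinely different route from the paper's. The paper evaluates $\int_0^\infty \Gamma(-\tfrac12, 4\pi|\td{n}|y)\,e^{-4\pi(n+\td{n})y}\,dy$ by citing a tabulated integral (Gradshteyn--Ryzhik 6.455), which expresses the answer as a ${}_2\mathrm{F}_1$ value, and then invokes the special-value identity DLMF 15.4.17 to collapse the hypergeometric function; the same cancellation $|\td{n}| + (n+\td{n}) = n$ that you highlight appears there as the simplification of the argument $\frac{n+\td{n}}{n+\td{n}+|\td{n}|}$ to $\frac{n+\td{n}}{n}$. You instead split $\Gamma(-\tfrac12,x) = 2x^{-1/2}e^{-x} - 2\Gamma(\tfrac12,x)$ via the incomplete-Gamma recurrence, reduce the first piece to an elementary Gamma integral and the second to a Laplace transform proved by Tonelli, and both computations land on the paper's intermediate value $\bigl(2\sqrt{\pi|\td{n}|}\,(\sqrt{n}+\sqrt{|\td{n}|})\bigr)^{-1}$ before the final multiplication by $4\pi(n+\td{n})$. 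What your approach buys is self-containedness: no table lookup and no hypergeometric identities, only positivity to justify the interchange of integrals. What the paper's approach buys is brevity on the page. One cosmetic quibble: the collapse of your bracketed difference to a single term is just a common-denominator computation; the factorization $n+\td{n} = (\sqrt{n}-\sqrt{|\td{n}|})(\sqrt{n}+\sqrt{|\td{n}|})$ is actually consumed one step earlier, when you cancel $\sqrt{n}-\sqrt{|\td{n}|}$ against $1/(n+\td{n})$ in the Laplace-transform piece. This does not affect correctness.
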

\begin{proof}
Again, we suppress the limit~$s \ra 0$ from our calculation, since the integral is convergent at~$s = 0$. We have to evaluate
\begin{gather*}
  \int_0^\infty
  \Gamma\big(-\tfrac{1}{2},\, 4 \pi |\td{n}| y\big)
  \exp(-4 \pi (n + \td{n}) y) \,\rmd\!y
\tx{.}
\end{gather*}
We apply case~1 of (6.455) on p.~657 of~\cite{gradshteyn-ryzhik-2007} with~$\alpha \leadsto 4 \pi |\td{n}|$, $\beta \leadsto 4 \pi (n + \td{n})$, $\mu \leadsto 1$, and $\nu \leadsto -\frac{1}{2}$. The assumptions $\Re(\alpha + \beta) = 4 \pi ((n + \td{n}) + |\td{n}|) > 0$ (since $n + \td{n} > 0$), $\Re(\mu) = 1 > 0$, and $\Re(\mu + \nu) = \frac{1}{2} > 0$ are satisfied. As a result, we obtain
\begin{gather*}
  \frac{(4 \pi |\td{n}|)^{-\frac{1}{2}}\, \Gamma(\frac{1}{2})}
       {1 \cdot (4 \pi ((n + \td{n}) + |\td{n}|))^{\frac{1}{2}}}\,
  {}_2\rmF_1\Big(1,\tfrac{1}{2};\,2;\, \frac{n + \td{n}}{n + \td{n} + |\td{n}|} \Big)
=
  \frac{1}{4 \sqrt{\pi n |\td{n}|}}\,
  {}_2\rmF_1\Big(1,\tfrac{1}{2};\,2;\, \frac{n + \td{n}}{n} \Big)
\tx{.}
\end{gather*}
To evaluate the hypergeometric function, we employ 15.4.17 of~\cite{nist-dlmf-1-0-27} with~$a \leadsto \frac{1}{2}$. It allows us to simplify the previous expression to
\begin{gather*}
  \frac{1}{4 \sqrt{\pi n |\td{n}|}}\,
  \Big(\frac{1}{2} + \frac{1}{2} \sqrt{1 - \frac{n + \td{n}}{n}} \Big)^{-1}
=
  \frac{1}{2 \sqrt{\pi |\td{n}|} \,(\sqrt{n} + \sqrt{|\td{n}|})}
\tx{.}
\end{gather*}
We combine this with the leading factor $4 \pi (n + \td{n})$ in the defining equation~\eqref{eq:def:holomorphic-projection} of the holomorphic projection to finish the proof.
\end{proof}

The next three results compute the $n$\thdash\ coefficient of~$\pi_2^\hol \big( (\rmU_{a,b} E_{\frac{3}{2}}) \cdot ( \theta_{a,\beta} + \theta_{a,-\beta}) \big)$ for certain~$n$. The first elementary lemma establishes the existence of a subprogression of~$a \ZZ + b = \{ an + b : n \in \ZZ \}$ satisfying arithmetic conditions that will be useful in the proof of Theorem A. To state it, we let~$\ordp(a)$ be the maximal exponent for powers of a prime~$p$ dividing an nonzero integer~$a$.
\begin{lemma}
\label{la:subprogression}
Let~$\td{a} \in \ZZ_{\ge 1}$ and~$\td{b} \in \ZZ$ be such that~$-\td{b}$ is a square modulo~$\td{a}$. Denote by~$P$ the set of prime divisors of~$\td{a}$. Then there exist~$a \in \ZZ_{\ge 1}$ and $b \in \ZZ$ such that 

\begin{enumerateroman}
\item
\label{it:la:subprogression:first-condition}
\label{it:la:subprogression:primes-in-a}
We have~$\td{a} \isdiv a$, and if~$p$ is a prime divisor of~$a$ then~$p \in P$.

\item
\label{it:la:subprogression:-b-is-square}
The integer~$-b$ is a square modulo~$a$. 

\item
\label{it:la:subprogression:valuations}
We have 
\begin{gather*}
  h(p)
\;>\;
  \begin{cases}
    2 k(p)\tx{,} & \tx{if } p \neq 2 \tx{;} \\
    2 k(p) + 4\tx{,} & \tx{if } p = 2 \tx{;}
  \end{cases}
\end{gather*}
where $h(p) = \ordp(a)$ and $k(p) = \ordp(b)$.

\item
\label{it:la:subprogression:last-condition}
\label{it:la:subprogression:-4b-not-in-prime-products}
For~$p \in P$ and for all disjoint sets~$P_1, P_2$ with $P_1 \cup P_2 = P \backslash \{p \}$, we have
\begin{gather}
\label{eq:it:la:subprogression:-4b-not-in-prime-products}
  -4b
\;\not\equiv\;
  \left\{
  \begin{alignedat}{3}
  &
    p^{k(p)}
    \prod_{q \in P_1} q^{k(q)}
    \prod_{q \in P_2} q^{2 h(q)}
  &&
    \;\pmod{ p^{h(p)}} 
  \tx{,}\qquad
  &&
    \tx{if } p \neq 2
  \tx{,}\\
  &
    p^{k(p) + 2}
    \prod_{q \in P_1} q^{k(q)}
    \prod_{q \in P_2} q^{h(q)}
  &&
    \;\pmod{ p^{h(p)}}
  \tx{,}\qquad
  &&
    \tx{if } p = 2
  \tx{.}
  \end{alignedat}
  \right.
\end{gather}
\end{enumerateroman}
\end{lemma}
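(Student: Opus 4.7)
The plan is to construct $a$ and $b$ via the Chinese Remainder Theorem, prescribing the $p$-adic behaviour at each prime of $P$ independently. Write $\tilde h(p) := \ordp(\tilde a)$. For every $p \in P$, I will choose an exponent $h(p) \ge \tilde h(p)$ and a residue class $b_p \in \ZZ/p^{h(p)}\ZZ$ with $b_p \equiv \tilde b \pmod{p^{\tilde h(p)}}$. Setting $a := \prod_{p \in P} p^{h(p)}$ then gives~\ref{it:la:subprogression:primes-in-a}, and assembling $b$ from the $b_p$ by CRT yields $b \equiv \tilde b \pmod{\tilde a}$; conditions~\ref{it:la:subprogression:-b-is-square}, \ref{it:la:subprogression:valuations}, and~\ref{it:la:subprogression:-4b-not-in-prime-products} are all prime-local and may be verified one $p$ at a time.

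For a fixed $p$, I first pin down $k(p) = \ordp(b)$. If $\ordp(\tilde b) < \tilde h(p)$, every lift inherits this valuation, forcing $k(p) = \ordp(\tilde b)$; a standard analysis of squares in $(\ZZ/p^n\ZZ)^\times$ applied to the assumption that $-\tilde b$ is a square mod $p^{\tilde h(p)}$ shows that $k(p)$ is then automatically even. Otherwise $\tilde b \equiv 0 \pmod{p^{\tilde h(p)}}$ and I take $k(p)$ to be the smallest even integer with $k(p) \ge \tilde h(p)$. Condition~\ref{it:la:subprogression:valuations} is arranged by taking $h(p)$ sufficiently large.

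Writing $b_p \equiv p^{k(p)} \beta_p \pmod{p^{h(p)}}$ with $\beta_p$ a unit, the requirement that $-b_p$ be a square mod $p^{h(p)}$ becomes a condition on $\beta_p$: for odd $p$, by Hensel's lemma, $-\beta_p$ must lie in the index-$2$ subgroup of squares of $(\ZZ/p^{h(p)-k(p)}\ZZ)^\times$; for $p=2$, it must satisfy $-\beta_2 \equiv 1 \pmod{8}$, which is consistent with $h(2) > 2k(2)+4$. Modulo $p^{h(p)}$ the primes $q \ne p$ appearing on the right-hand side of~\eqref{eq:it:la:subprogression:-4b-not-in-prime-products} are units, so that side has the same $p$-adic valuation as $-4b$ (namely $k(p)$ for odd $p$, or $k(p)+2$ for $p=2$), and after dividing out this common power of $p$, condition~\ref{it:la:subprogression:-4b-not-in-prime-products} translates into the requirement that $\beta_p$ avoid $2^{|P|-1}$ explicit unit residues modulo $p^{h(p)-k(p)}$ (resp.\ $2^{h(2)-k(2)-2}$ for $p=2$).

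The remainder is a counting argument: the set of units $\beta_p$ that lift the prescribed class of $\tilde b / p^{k(p)}$ modulo $p^{\tilde h(p)-k(p)}$ (or, when $k(p)\ge\tilde h(p)$, are unrestricted from $\tilde b$) and satisfy the square condition is a positive proportion of $(\ZZ/p^{h(p)-k(p)}\ZZ)^\times$, hence of size $\gg p^{h(p)-\tilde h(p)}$, which dwarfs the $2^{|P|-1}$ forbidden residues once $h(p)-\tilde h(p)$ exceeds a constant depending only on $|P|$. Choosing such $h(p)$ for every $p\in P$ and then lifting by CRT produces the desired pair $(a,b)$. The main obstacle is the bookkeeping for $p=2$: the more intricate structure of squares in $(\ZZ/2^n\ZZ)^\times$, together with the $+2$ shift in the valuation on the right-hand side of~\eqref{eq:it:la:subprogression:-4b-not-in-prime-products}, both require extra care and together explain the need for the stronger lower bound $h(2) > 2k(2)+4$.
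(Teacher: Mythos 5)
Your proposal is correct and rests on the same ingredients as the paper's own proof: a prime-by-prime analysis in which the condition that $-b$ be a square reduces, once $h(p)-k(p)$ is large enough (with the usual extra care at $p=2$), to a condition on the unit part of $b$ that is stable under further lifting, followed by the observation that condition~(iv) forbids only $2^{|P|-1}$ residues, which are avoided by taking $h(p)$ sufficiently large. The only difference is organizational --- the paper builds $(a,b)$ by iterated substitutions $(a,b)\mapsto(aa',\,b+ab')$ starting from $(\tilde a,\tilde b)$, whereas you prescribe all local data at once via the Chinese Remainder Theorem --- and your one-shot formulation is arguably the cleaner packaging of the identical local analysis and counting argument.
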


\begin{proof}
In order to produce integers~$a$ and~$b$ that satisfy Conditions~\ref{it:la:subprogression:first-condition}--\ref{it:la:subprogression:last-condition}, we initially set $a = \td{a}$ and $b = \td{b}$ and then repeatedly replace~$a$ and~$b$ by~$a a'$ and~$b + a b'$ for integers~$a', b'$ in such a way that successively more of these conditions are met. In accordance with Condition~\ref{it:la:subprogression:primes-in-a}, the prime divisors of~$a'$ must be elements of~$P$.

Recall that~$h(p) = \ordp(a)$ and~$k(p) = \ordp(b)$. For each prime~$p \in P$, we define $a_p := a \slash p^{h(p)}$ and similarly define~$b_p := b \slash p^{k(p)}$. We start by making the substitutions~$a \leadsto a a'$ and~$b \leadsto b + a b'$ several times, in such a way that~$-(b + a b')$ is still a square modulo~$a a'$ (hence Condition~\ref{it:la:subprogression:-b-is-square} remains true) and that Condition~\ref{it:la:subprogression:valuations} is satisfied.

First choose $a' = 1$, and pick a suitable~$b'$ such that $h(p) \ge k(p)$ holds for each~$p \in P$ after replacing $b$ by~$b + a b'$. Next let~$P_0 := \{ p \in P \,:\, h(p) = k(p) \}$. For each~$p \in P_0$, let~$e(p) = 1$ if $h(p)$ is even, and let $e(p) = 2$ if~$h(p)$ is odd. Consider $a' = \prod_{p \in P_0} p^{e(p)}$. Let~$b'$ be any integer satisfying~$\gcd(b', \prod_{p \in P \backslash P_0} p ) = 1$ and
\begin{gather*}
  b'
\;\equiv\;
  \left\{
  \begin{alignedat}{3}
  &
    -(1 + b_p) a_p^{-1}
  &&
    \;\pmod{p}
  \tx{,}\qquad
  &&
    \tx{if $p \in P_0$ and $2 \isdiv h_1(p)$;}
  \\
  &
    -(p + b_p) a_p^{-1}
  &&
    \;\pmod{p^2}
  \tx{,}
  &&
    \tx{if $p \in P_0$ and $2 \nisdiv h_1(p)$.}
  \end{alignedat}
  \right.
\end{gather*}
Note that the minus signs of the terms~$-(1 + b_p)$ and~$-(p + b_p)$ are required to ensure that Condition~\ref{it:la:subprogression:-b-is-square} holds after replacing~$a$ and~$b$ by~$a a'$ and~$b + a b'$. After this substitution we may assume that $h(p) > k(p)$ for all $p \in P$.

We take this idea a step further when $p = 2$. If $h(2) - k(2) = 1$, then we choose $a' = 2$, and $b'$ is~$0$ or~$1$ depending on whether $b_p$ is~$3$ or~$1$ modulo~$4$. We find that we may assume~$h(2) - k(2) \ge 2$. Similarly, if we assume that $h(2) - k(2) = 2$, then we can choose $a' = 2$, and $b'$ is~$0$ or~$1$ depending on whether $b_p$ is~$7$ or~$3$ modulo~$8$. Thus, we can assume that $h(2) - k(2) \ge 3$.

After making the above assumptions on~$h(p)$ and~$k(p)$, we conclude that~$-b$ is also a square modulo~$a a'$ for any~$a'$ satisfying Condition~\ref{it:la:subprogression:primes-in-a}. In particular, after choosing an appropriate~$a'$, we may assume that~$h(p)$ is as large as we wish. Thus, we can assume that Condition~\ref{it:la:subprogression:valuations} holds.

It remains to make one more substitution~$a \leadsto a a'$ and~$b \leadsto b + a b'$ in order to ensure Condition~\ref{it:la:subprogression:-4b-not-in-prime-products}. Observe that the right hand side of~\eqref{eq:it:la:subprogression:-4b-not-in-prime-products} can take at most~$2^{|P|-1}$ different values, corresponding to the $2^{|P|-1}$ different choices of~$P_1$ and~$P_2$. Given~$a'$, we let~$h'(p) := \ordp(a')$ for~$p \in P$. Any choice of~$b'$ preserves Conditions~\ref{it:la:subprogression:primes-in-a}, \ref{it:la:subprogression:-b-is-square}, and~\ref{it:la:subprogression:valuations}, and this yields~$p^{h'(p)}$ different values for the left hand side of~\eqref{eq:it:la:subprogression:-4b-not-in-prime-products} after replacing~$b$ by~$b + a b'$. Finally, choose~$a'$ in such a way that~$p^{h'(p)} > 2^{|P|-1}$ for every~$p \in P$, and then pick a suitable~$b'$ to validate Condition~\ref{it:la:subprogression:-4b-not-in-prime-products}.
\end{proof}

The next result will be useful in the computation of the Fourier series coefficients of~$\pi_2^\hol \big( (\rmU_{a,b} E_{\frac{3}{2}}) \cdot ( \theta_{a,\beta} + \theta_{a,-\beta}) \big)$. We continue using~$P$ to denote the set of prime divisors of~$a$, and we assume the definition of~$h(p)$ and~$k(p)$ from Lemma~\ref{la:subprogression}. We also let $\beta$ be such that $\beta^2 \equiv -b \pmod{a}$. Thus, we have~$a = \prod_{p \isdiv a} p^{h(p)}$, and~$\beta = \beta' \prod_{p \isdiv a} p^{k(p) \slash 2}$, where~$\beta'$ is co-prime to~$a$. 

\begin{lemma}
\label{la:nonarchimedian-contribution}
Assume that~$a \in \ZZ_{\ge 1}$ and~$b \in \ZZ$ satisfy Conditions~\ref{it:la:subprogression:first-condition}--\ref{it:la:subprogression:last-condition} in Lemma~\ref{la:subprogression}, and let~$\ell$ be a prime that does not divide~$a$.

For each prime~$p \in P$, let~$q(p)$ be a prime that is congruent~$1$ modulo~$p^{h(p)}$, congruent to~$2 \beta \slash p^{k(p) \slash 2}$ modulo~$p^{h(p) - k(p) \slash 2}$, and congruent modulo~$\ell$ to a unit such that
\begin{alignat*}{3}
&
  p^{h(p)} + q(p) p^{k(p)}
&&\;\equiv\; 1
  \;\pmod{\ell}
\tx{,}\quad
&&
  \tx{if~$p \ne 2$, or}
\\
&
  p^{h(p)} + q(p) p^{k(p) + 1}
&&\;\equiv\;
  1
  \;\pmod{\ell}
\tx{,}\quad
&&
  \tx{if~$p = 2$.}
\end{alignat*}
Moreover, let 
\begin{gather*}
  n_a
\;:=\;
  \left\{
  \begin{alignedat}{2}
  &
    \hphantom{8{}}
    \prod_{p \in P} q(p) p^{k(p)}
  \tx{,}\quad
  &&
    \tx{if\/ $2 \nisdiv a$;}
  \\
  &
    8
    \prod_{p \in P} q(p) p^{k(p)}
  \tx{,}\quad
  &&
    \tx{if\/ $2 \isdiv a$.}
  \end{alignedat}
  \right.
\end{gather*}

Then we have
\begin{gather*}
  \sum_{\substack{\td\beta^2 \equiv -b \,\pmod{a}\\\epsilon = \pm 1}}
  \sum_{\substack{a n_a = d_1 d_2\\d_1, d_2 > 0\\d_1 \equiv \epsilon\beta + \td\beta \,\pmod{a}\\d_2 \equiv \epsilon\beta - \td\beta \,\pmod{a}}}
  \big( d_1 + d_2 \big)
\;\equiv\;
  2
  \;\pmod{\ell}
\tx{.}
\end{gather*}
\end{lemma}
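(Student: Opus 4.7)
The strategy is to analyze the sum via the Chinese Remainder Theorem, reducing to prime-by-prime analysis over $P$. First, I would reformulate the sum. The congruences $d_1 \equiv \epsilon\beta + \td\beta$ and $d_2 \equiv \epsilon\beta - \td\beta \pmod{a}$ give $d_1 + d_2 \equiv 2\epsilon\beta$ and $d_1 - d_2 \equiv 2\td\beta \pmod{a}$, so $\td\beta$ is uniquely determined by the triple $(d_1, d_2, \epsilon)$. Moreover, the identity $(d_1 - d_2)^2 = (d_1 + d_2)^2 - 4 d_1 d_2 \equiv 4\beta^2 - 4 a n_a \equiv -4 b \pmod{4 a}$---where the factor of $8$ in $n_a$ for even $a$ ensures the congruence holds modulo $4 a$ and not just modulo $a$---shows that the induced $\td\beta$ automatically satisfies $\td\beta^2 \equiv -b \pmod{a}$. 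Hence the sum reduces to a sum over triples $(\epsilon, d_1, d_2)$ with $d_1 d_2 = a n_a$, $d_1, d_2 > 0$, and $d_1 + d_2 \equiv 2\epsilon\beta \pmod{a}$, weighted by $d_1 + d_2$.

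Second, I would pin down the $p$-adic valuations of $d_1$ and $d_2$ for each $p \in P$. Since $v_p(2\epsilon\beta) = k(p)/2$ (plus $1$ at $p = 2$) is strictly less than $h(p)$ by Condition~\ref{it:la:subprogression:valuations}, the congruence $d_1 + d_2 \equiv 2\epsilon\beta \pmod{p^{h(p)}}$ forces $v_p(d_1 + d_2) = k(p)/2$. Combined with $v_p(d_1) + v_p(d_2) = v_p(a n_a)$, this pins the multiset $\{v_p(d_1), v_p(d_2)\}$ uniquely---encoded by a binary choice $\sigma(p) \in \{1, 2\}$ for which factor carries the smaller valuation---and a second binary choice $\tau(p) \in \{1, 2\}$ determines which of $d_1, d_2$ contains the prime $q(p)$. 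Next, working modulo $p_0^{h(p_0)}$ for each $p_0 \in P$, the congruences on $q(p_0)$ from the hypothesis make the full constraint solvable for exactly one pair $(\tau(p_0), \epsilon)$ when $\sigma$ is ``coherent'' across $P$; the ``incoherent'' configurations, where $\sigma$ mixes $1$ and $2$ at different primes in a nontrivial way, would demand spurious identities of the form $4 \beta'^2 \equiv p^{k(p_0)} \prod_{q \in P_1} q^{k(q)} \prod_{q \in P_2} q^{2 h(q)} \pmod{p_0^{h(p_0)}}$, which Condition~\ref{it:la:subprogression:-4b-not-in-prime-products} forbids one bipartition $P_1 \sqcup P_2 = P \setminus \{p_0\}$ at a time.

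Exactly two factorizations survive: the ``all-$\sigma = 1$'' configuration and its global mirror ``all-$\sigma = 2$'' (equivalent to the swap $(d_1, d_2) \leftrightarrow (d_2, d_1)$, $\td\beta \to -\td\beta$, $\epsilon \to -\epsilon$). For each I would compute $d_1 + d_2 \pmod{\ell}$ by combining the three congruences on each $q(p)$---the first making $q(p)$ trivial modulo the other prime-power parts of $a$, the second pinning $q(p)$ modulo $p^{h(p) - k(p)/2}$, and the third determining $q(p) \pmod{\ell}$ via the identity $p^{h(p)} + q(p) p^{k(p)} \equiv 1 \pmod{\ell}$---and verify that each factorization contributes $\equiv 1 \pmod{\ell}$, yielding a total of $2 \pmod{\ell}$. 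The main obstacle is the delicate bookkeeping needed to match each of the $2^{|P|-1}$ bipartitions in Condition~\ref{it:la:subprogression:-4b-not-in-prime-products} to the corresponding incoherent configuration, together with the parallel treatment of the prime $p = 2$, in which the factor of $8$ in $n_a$ and the shifted condition $p^{h(p)} + q(p) p^{k(p)+1} \equiv 1 \pmod{\ell}$ absorb the $2$-adic anomalies.
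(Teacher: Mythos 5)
Your overall strategy is the same as the paper's: reduce to a prime\nobreakdash-by\nobreakdash-prime analysis via CRT, use the valuation inequalities of Condition~\ref{it:la:subprogression:valuations} to pin down the local structure of an admissible factorization $an_a=d_1d_2$, invoke Condition~\ref{it:la:subprogression:-4b-not-in-prime-products} to eliminate unwanted configurations, evaluate what survives using the congruences imposed on the primes $q(p)$, and extract the factor $2$ from the $d_1\leftrightarrow d_2$ swap (your ``global mirror'' is exactly the paper's symmetry $\td\beta\mto-\td\beta$). Your observation that $v_p(d_1+d_2)=k(p)/2$ forces the multiset $\{v_p(d_1),v_p(d_2)\}$ is a clean way to package the paper's case analysis of the square roots $\td\beta\equiv\pm\beta+p^{h(p)-k(p)}\td\beta'$.

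The gap is in your final enumeration. You assert that exactly two factorizations survive (``all-$\sigma=1$'' and its mirror), each contributing $\equiv 1\pmod\ell$, and that the mixed sign patterns are killed by Condition~\ref{it:la:subprogression:-4b-not-in-prime-products}. That is not how the count works, and it is inconsistent with the shape of the hypotheses on $q(p)$. In the paper's proof, Condition~\ref{it:la:subprogression:-4b-not-in-prime-products} together with $h(p)>2k(p)$ is used to rule out the branch $\epsilon=-1$ and the square roots with $\td\beta'\not\equiv 0$; what remains is one factorization for \emph{each} of the $2^{|P|}$ residual square roots (i.e.\ each choice of sign of $\td\beta$ at each prime), and the resulting sum of $d_1$ over all of them factors as $\prod_{p\neq 2}\bigl(p^{h(p)}+q(p)p^{k(p)}\bigr)$ times the $p=2$ factor, as in~\eqref{eq:theta-theta-holomorphic-projection-coefficient-nonarchimedean-contribution}. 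The defining congruences on $q(p)$ are tailored precisely so that each \emph{local two-term sum} $p^{h(p)}+q(p)p^{k(p)}$ (resp.\ $2^{h(2)}+q(2)2^{k(2)+1}$) is $\equiv 1\pmod\ell$ --- not so that any individual global factorization contributes $1$. If only two global terms survived and each were $\equiv 1$, the hypothesis would naturally be a product condition on $q(p)$ rather than the stated sum condition; your sketch as written cannot reproduce the normalization $\sum\equiv 2\pmod\ell$ once $|P|\ge 2$. So while the skeleton of your argument is the paper's, the step you defer to ``delicate bookkeeping'' is exactly where your accounting goes wrong, and it would need to be redone along the lines above.
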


\begin{proof}
Because the symmetry~$\td\beta \mto - \td\beta$ swaps~$d_1$ and~$d_2$, we have
\begin{gather}
\label{eq:la:nonarchimedian-contribution:negation-symmetry}
  \sum_{\substack{\td\beta^2 \equiv -b \,\pmod{a}\\\epsilon = \pm 1}}
  \hspace{-0.4em}
  \sum_{\substack{a n_a = d_1 d_2\\d_1, d_2 > 0\\d_1 \equiv \epsilon\beta + \td\beta \,\pmod{a}\\d_2 \equiv \epsilon\beta - \td\beta \,\pmod{a}}}
  \hspace{-1em}
  \big( d_1 + d_2 \big)
\;=\;
  2 \cdot
  \sum_{\substack{\td\beta^2 \equiv -b \,\pmod{a}\\\epsilon = \pm 1}}
  \hspace{-0.4em}
  \sum_{\substack{a n_a = d_1 d_2\\d_1, d_2 > 0\\d_1 \equiv \epsilon\beta + \td\beta \,\pmod{a}\\d_2 \equiv \epsilon\beta - \td\beta \,\pmod{a}}}
  \hspace{-1em}
  d_1
\tx{.}
\end{gather}

If~$p \ne 2$, the square roots~$\td\beta$ of~$-b \,\pmod{p^{h(p)}}$ are of the form~$\pm \beta + p^{h(p) - k(p)} \td\beta'$ for some~$\td\beta' \in \ZZ \slash p^{k(p)} \ZZ$. In the sum, we therefore have the conditions
\begin{gather*}
  d_1
\;\equiv\;
  \epsilon \beta \pm \beta + p^{h(p) - k(p)} \td\beta'
  \;\pmod{p^{h(p)}}
\tx{,}\quad
  d_2
\;\equiv\;
  \epsilon \beta \mp \beta - p^{h(p) - k(p)} \td\beta'
  \;\pmod{p^{h(p)}}
\tx{.}
\end{gather*}
Using~$h(p) > 2 k(p)$, asserted by Condition~\ref{it:la:subprogression:valuations} of Lemma~\ref{la:subprogression}, and~\eqref{eq:it:la:subprogression:-4b-not-in-prime-products}, we find that the only possibility that $a n_a = d_1 d_2$ satisfies these congruences is if we have~$\epsilon = +1$, $\td\beta' \equiv 0 \,\pmod{p^{h(p)}}$, and
\begin{gather}
\label{eq:la:nonarchimedian-contribution:d12-divisors}
  p^{h(p)} \isdiv d_1
\tx{,}\;
  q(p) p^{k(p)} \isdiv d_2
\quad\tx{or}\quad
  q(p) p^{k(p)} \isdiv d_1
\tx{,}\;
  p^{h(p)} \isdiv d_2
\tx{.}
\end{gather}

The case of~$p = 2$ allows for more square roots of~$-b \,\pmod{2^{h(p)}}$. Specifically, \linebreak $\td\beta = \{1, c_3, c_5, c_7\} \beta + p^{h(p)-k(p)} \td\beta'$, where by the set notation we indicate one of the factors occurs and $c_3 \equiv 3 \,\pmod{8}$, $c_5 \equiv 5 \,\pmod{8}$, $c_7 \equiv 7 \,\pmod{8}$ are roots of~$1$ modulo~$2^{h(2)}$. We now have the conditions
\begin{align*}
  d_1 &\;\equiv\; \{2, 1+c_3, 1+c_5, 1+c_7\} \beta + 2^{h(2)-k(2)} \td\beta'
  \;\pmod{2^{h(2)}}
\tx{,}
\\
  d_2 &\;\equiv\; \{0, 1-c_3, 1-c_5, 1-c_7\} \beta - 2^{h(2)-k(2)} \td\beta'
  \;\pmod{2^{h(2)}}
\tx{.}
\end{align*}
We use that $k(p) + 3 < h(p) - k(p)$, asserted by Condition~\ref{it:la:subprogression:valuations} of Lemma~\ref{la:subprogression}, and~\eqref{eq:it:la:subprogression:-4b-not-in-prime-products} in order to see that the only possibilities are~$\epsilon = +1$, $\td\beta' \equiv 0 \,\pmod{2^{h(2)}}$, and the divisibilities in~\eqref{eq:la:nonarchimedian-contribution:d12-divisors} with~$p = 2$.

Observe that modulo~$\ell$, the sum actually factors for our choice of~$n_a$. Specifically, we have
\begin{multline}
\label{eq:theta-theta-holomorphic-projection-coefficient-nonarchimedean-contribution}
  \sum_{\td\beta^2 \equiv -b \,\pmod{a}}
  \sum_{\substack{a n_a = d_1 d_2\\d_1, d_2 > 0\\d_1 \equiv \beta + \td\beta \,\pmod{a}\\d_2 \equiv \beta - \td\beta \,\pmod{a}}}
  d_1
\\
\;=\;
  \prod_{\substack{p \isdiv a\\p \ne 2}}
  \big( p^{h(p)} + q(p) p^{k(p)} \big)
  \,\cdot\,
  \left\{
  \begin{alignedat}{2}
  &
    1
  \tx{,}\quad
  &&
    \tx{if $2 \nisdiv a$;}
  \\
  &
    2^{h(2)} + q(2) 2^{k(2)+1}
  \tx{,}\quad
  &&
    \tx{if $2 \isdiv a$.}
  \end{alignedat}
  \right.
\end{multline}
Our choice of~$q(p)$ ensures that this is congruent to~$1$ modulo~$\ell$. The additional factor~$2$ in~\eqref{eq:la:nonarchimedian-contribution:negation-symmetry} yields the desired result.
\end{proof}

Next we compute certain coefficients of~$\pi^\hol_2\big((\rmU_{a,b}\,E_{\frac{3}{2}}) \cdot  (\theta_{a,\beta} + \theta_{a,-\beta}) \big)$. We assume the notation above for $n_a$, $k(p)$, $h(p)$, and $\beta$. 

\begin{proposition}
\label{prop:coefficientformula}
Assume that~$H(an+b) \equiv 0 \pmod{\ell}$ for all $n$. Furthermore, assume that $\beta^2 \equiv -b \,\pmod{a}$, and that~$a$ and~$b$ satisfy Conditions~\ref{it:la:subprogression:first-condition}--\ref{it:la:subprogression:last-condition} of Lemma~\ref{la:subprogression}.

Then~$\pi^\hol_2\big((  \rmU_{a,b}\,E_{\frac{3}{2}}) \cdot  (\theta_{a,\beta} + \theta_{a,-\beta}) \big)$ is a quasi-modular form for~$\Ga_1(4 a)$ and
\begin{gather*}
  \pi^\hol_2\big((
  \rmU_{a,b}\,E_{\frac{3}{2}})
  \cdot
  (\theta_{a,\beta} + \theta_{a,-\beta})
  \big)
\;=\;
  \sum_{n = 0}^{\infty} c(n) e(n \tau)
\tx{,}
\end{gather*}
where
\begin{gather*}
  c(n_a pq) \;\equiv\; -2 (1 + q)
  \;\pmod{\ell}
\quad\tx{and}\quad
  c(n_a p^2 q) \;\equiv\; -2 (1 + p + q)
  \;\pmod{\ell}
\end{gather*}
for any primes~$p,q \equiv 1 \,\pmod{\ell}$ such that 
\begin{gather}
\label{eq:eisenstein-theta-holomorphic-projection-coefficient-prime-condition}
  p^2 > (a n_a) q > (a n_a)^2 p > (a n_a)^3
\tx{.}
\end{gather}
\end{proposition}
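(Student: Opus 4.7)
The plan is to apply holomorphic projection to the product $(\rmU_{a,b}E_{\frac{3}{2}})\cdot(\theta_{a,\beta}+\theta_{a,-\beta})$, use the hypothesis $H(an+b)\equiv 0 \pmod\ell$ to kill the holomorphic part modulo $\ell$, then evaluate Fourier coefficients of the holomorphic projection of the non-holomorphic piece using Lemmas~\ref{la:holomorphic-projection-constant-contribution}, \ref{la:holomorphic-projection-cuspidal-contribution}, and~\ref{la:nonarchimedian-contribution}.

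\emph{Step 1 (quasi-modularity).} By \eqref{eq:eisenstein-ab-modularity} and \eqref{eq:theta-modularity}, both factors transform for $\Gamma(4a)$, with weights $\frac{3}{2}$ and $\frac{1}{2}$ respectively. Since $\beta^2\equiv -b\pmod a$, the product has integer Fourier exponents, so it transforms like a non-holomorphic weight-$2$ modular form for $\Gamma_1(4a)$. Theorem~5 of~\cite{imamoglu-raum-richter-2014} then gives that $\pi^\hol_2$ of this product is a quasi-modular form of weight $2$ for $\Gamma_1(4a)$.

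\emph{Step 2 (reduction mod $\ell$).} Decompose $\rmU_{a,b}E_{\frac{3}{2}}=E^++E^*$ into holomorphic and non-holomorphic parts. The hypothesis gives $E^+\equiv 0\pmod\ell$ as a formal Fourier series, so modulo $\ell$ only $\pi^\hol_2\!\bigl(E^*\cdot(\theta_{a,\beta}+\theta_{a,-\beta})\bigr)$ contributes. Using \eqref{eq:def:non-holomorphic-eichler-integral} with $k=\tfrac{3}{2}$ and~\eqref{eq:Uab-theta} to expand $E^*$, the terms that appear after multiplying by $\theta_{a,\pm\beta}$ are exactly of the forms handled by Lemmas~\ref{la:holomorphic-projection-constant-contribution} and~\ref{la:holomorphic-projection-cuspidal-contribution}. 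Summing and simplifying the factors of $|n|,\sqrt{a}$ gives that the $N$th Fourier coefficient of $\pi^\hol_2(E\cdot\Theta)$ is congruent modulo $\ell$ to
\[
  -\sum_{\substack{d_1d_2=aN,\;0<d_1<d_2\\ \exists\,\epsilon=\pm1,\ \tilde\beta^2\equiv -b\,(a):\\ d_1\equiv\epsilon\beta-\tilde\beta,\ d_2\equiv\epsilon\beta+\tilde\beta\,(a)}} d_1,
\]
i.e.\ a divisor sum weighted by the congruence conditions that arise from requiring $m=(d_2-d_1)/2$ and $m'=(d_1+d_2)/2$ to satisfy $m^2\equiv-b\pmod a$ and $m'\equiv\pm\beta\pmod a$.

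\emph{Step 3 (restriction of factorizations).} The size inequalities $p^2>(an_a)q>(an_a)^2p>(an_a)^3$ force a strong restriction on the divisor structure of $aN$: for $N=n_apq$ one checks that $q\mid d_2$ necessarily and that the $p$-content of $d_1$ is $0$ or $1$, so the admissible pairs are $(d',an_apq/d')$ and $(d'p,an_aq/d')$ with $d'\mid an_a$; for $N=n_ap^2q$ an additional family $(d'q,an_ap^2/d')$ appears while $d_1$ cannot contain $p^2$. This yields a factorization of the divisor sum into an ``$(an_a)$-part'' indexed by $d'\mid an_a$ and a polynomial prefactor in $p$ and $q$ coming from the case count.

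\emph{Step 4 (evaluating the $(an_a)$-part via Lemma~\ref{la:nonarchimedian-contribution}).} Because $p,q$ are coprime to $a$, the congruence conditions on $(d_1,d_2)\pmod a$ translate, in each of the above cases, into a condition on $d'\pmod a$ obtained from the original one by multiplication by the units $p^{i}q^{j}\pmod a$. Summing over $\epsilon\in\{\pm 1\}$ and $\tilde\beta$ with $\tilde\beta^2\equiv -b\pmod a$ absorbs this twist, so that each inner sum is precisely the one appearing in Lemma~\ref{la:nonarchimedian-contribution} (with the symmetry $\tilde\beta\to-\tilde\beta$ relating the ordered sum $\sum(d_1+d_2)\equiv 2\pmod\ell$ to $\sum d_1\equiv 1\pmod\ell$, and hence to $\sum_{d'\mid an_a,\text{cong}}d'\equiv 2\pmod\ell$ after accounting for the $d_1<d_2$ restriction). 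Combining this with the prefactor using $p\equiv q\equiv 1\pmod\ell$ and rewriting $2(1+p)\equiv (1+p)(1+q)\equiv 2(1+q)$ for the $n_apq$-case and $(1+p+p^2)(1+q)\equiv 2(1+p+q)\pmod\ell$ for the $n_ap^2q$-case yields the stated formulas.

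\emph{Main obstacle.} The bookkeeping of Step~4, i.e.\ verifying that in every case the twisted congruence condition on $d'\pmod a$ is, after summation over $\epsilon$ and $\tilde\beta$, exactly the one in Lemma~\ref{la:nonarchimedian-contribution}. This relies on showing that multiplication by the units $p^iq^j\pmod a$ permutes the admissible pairs $(\epsilon\beta,\tilde\beta)$ in a way that preserves the sum modulo $\ell$, so that Lemma~\ref{la:nonarchimedian-contribution} can be applied uniformly across the different factorization families.
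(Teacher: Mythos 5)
Your outline reproduces the paper's proof step for step in its overall architecture: the reduction modulo~$\ell$ to $\frac{1}{16\pi}\pi^\hol_2\big((\rmU_{a,b}\,\theta^\ast)\cdot(\theta_{a,\beta}+\theta_{a,-\beta})\big)$ using the hypothesis on $H(an+b)$, the evaluation of the resulting integrals via Lemmas~\ref{la:holomorphic-projection-constant-contribution} and~\ref{la:holomorphic-projection-cuspidal-contribution}, the rewriting of the $n$\thdash\ coefficient as a divisor sum over factorizations $an=d_1d_2$ weighted by congruence conditions modulo~$a$, and the use of~\eqref{eq:eisenstein-theta-holomorphic-projection-coefficient-prime-condition} to decouple the archimedean condition $d_1<d_2$ from those congruences, so that the sum factors into an $an_a$\nbd part handled by Lemma~\ref{la:nonarchimedian-contribution} and a divisor sum over $n'=pq$ or $p^2q$.

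The one point where you diverge is exactly the one you flag as the main obstacle, and your proposed resolution there is not correct. If $d_1=d'u$ with $u$ a unit modulo~$a$ built from $p$ and $q$, then $d_1\equiv\epsilon\beta+\td\beta\pmod{a}$ becomes $d'\equiv u^{-1}(\epsilon\beta+\td\beta)\pmod{a}$, and $u^{-1}\td\beta$ is a square root of $-u^{-2}b$, not of $-b$; so unless $u^2\equiv1\pmod{a}$, multiplication by $u$ does \emph{not} permute the admissible pairs $(\epsilon\beta,\td\beta)$, and summing over $\epsilon$ and $\td\beta$ does not absorb the twist. The paper never needs such an identity: in its argument every divisor of $n'$ is congruent to $1$ modulo~$a$, so the congruence conditions fall entirely on the factors of $an_a$ and Lemma~\ref{la:nonarchimedian-contribution} applies verbatim. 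This is legitimate because of how $p$ and $q$ are produced in the proof of Theorem~\ref{mainthm:hurwitz-congruences}: $p\equiv1\pmod{a\ell}$ comes from Theorem~\ref{thm:serre} at level $4a$, and $q$ is chosen with $q\equiv1\pmod{a}$. The correct repair is therefore to carry the hypothesis $p\equiv q\equiv 1\pmod{a}$ into the proposition, not to prove a combinatorial statement about unit twists. A second, related problem: you reconcile the archimedean factors by invoking $p\equiv q\equiv1\pmod{\ell}$ (e.g.\ $2(1+p)\equiv 2(1+q)$), but this defeats the purpose of the proposition. The formulas $-2(1+q)$ and $-2(1+q+p)$ are recorded as explicit polynomials in $q$ precisely because the main theorem applies them with a prime $q\not\equiv1\pmod{\ell}$; the exact $q$\nbd dependence must be derived from the divisor structure of $n'$ alone (note that~\eqref{eq:eisenstein-theta-holomorphic-projection-coefficient-prime-condition} forces $q>an_a\,p>p$, which determines which element of each divisor pair is the smaller one), not recovered after the fact from congruences that will be violated in the application.
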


\begin{remark}
\label{rm:prop:coefficientformula}
In light of Lemma~\ref{la:subprogression}, the assumptions on $a$ and $b$ are no restriction. For any $a,b$ with $-b$ a square modulo $a$, we can find a subprogression of $a \ZZ + b$ satisfying these conditions. Furthermore, for any sufficiently large $p \equiv 1 \,\pmod{a \ell}$, there is a prime $q \equiv 1 \,\pmod{\ell}$ satisfying the Conditions in~\eqref{eq:eisenstein-theta-holomorphic-projection-coefficient-prime-condition}---this follows from the Prime Number Theorem for arithmetic progressions. 
\end{remark}

\begin{proof}
As we assume that~$H(a n + b) \equiv 0 \,\pmod{\ell}$ for all~$n \in \ZZ$, we have
\begin{align}
\nonumber
&
  \pi^\hol_2\big(
  (\rmU_{a,b}\,E_{\frac{3}{2}})
  \cdot
  (\theta_{a,\beta} + \theta_{a,-\beta})
  \big)
\\
\nonumber
\;=\;&
  \big( \rmU_{a,b}\sum_{D = 0}^\infty H(D) e(D \tau) \big)
  \cdot
  (\theta_{a,\beta} + \theta_{a,-\beta})
  \,+\,
  \frac{1}{16 \pi}
  \pi^\hol_2\big(
  (\rmU_{a,b}\,\theta^\ast)
  \cdot
  (\theta_{a,\beta} + \theta_{a,-\beta})
  \big)
\\
\label{eq:eisenstein-theta-holomorphic-projection}
\;\equiv\;&
  \frac{1}{16 \pi}
  \pi^\hol_2\big(
  (\rmU_{a,b}\,\theta^\ast)
  \cdot
  (\theta_{a,\beta} + \theta_{a,-\beta})
  \big)
  \;\pmod{\ell}
\tx{.}
\end{align}
Now~\eqref{eq:zagier-eisenstein-series} and~\eqref{eq:Uab-theta} lead us to the study of~$\sqrt{a}\, \theta^\ast_{a,\beta}$, which arise from~$\rmU_{a,b}\,\theta^\ast$. Write $\delta_\bullet$ for the Kronecker~$\delta$-function. For general $\beta, \td\beta \in \ZZ$, we use~\eqref{eq:def:non-holomorphic-eichler-integral} to compute that
\begin{align*}
  \sqrt{a}\, \theta^\ast_{a,\td\beta}(\tau) \cdot \theta_{a,\beta}(\tau)
\;=\;
&
  -
  2 \sqrt{a}
  \delta_{\td\beta \equiv 0 \,\pmod{a}}\,
  \sum_{m \equiv \beta \,\pmod{a}}
  y^{-\frac{1}{2}}
  e\big( \frac{m^2 \tau}{a} \big)
\\
&
  -
  2 \sqrt{\pi}
  \sum_{\substack{m \equiv \beta \,\pmod{a}\\\td{m} \equiv \td{\beta} \,\pmod{a}\\\td{m} \ne 0}}
  |\td{m}|
  \Gamma\big(-\tfrac{1}{2},\, 4 \pi \frac{\td{m}^2}{a} y \big)
  e\big( \frac{(m^2 - \td{m}^2) \tau}{a} \big)
\tx{.} 
\end{align*}
We apply Lemmas~\eqref{la:holomorphic-projection-constant-contribution} and~\eqref{la:holomorphic-projection-cuspidal-contribution} to find that $ \pi^\hol_2\big( \sqrt{a}\, \theta^\ast_{a,\td\beta}(\tau) \cdot \theta_{a,\beta}(\tau) \big)$ is equal to
\begin{gather*}
  -4 \pi
  \Big(
  \delta_{\td\beta \equiv 0 \,\pmod{a}}\,
  \sum_{\substack{m \equiv \beta \,\pmod{a}\\m \ne 0}}
  |m|
  e\big( \frac{m^2 \tau}{a} \big)
  \,+\,
  \sum_{\substack{m \equiv \beta \,\pmod{a}\\\td{m} \equiv \td{\beta} \,\pmod{a}\\\td{m} \ne 0}}
  \frac{m^2 - \td{m}^2}{|m| + |\td{m}|}
  e\big( \frac{(m^2 - \td{m}^2) \tau}{a} \big)
  \Big)
\tx{.}
\end{gather*}

Summarizing, we find that 
\begin{multline}
\label{eq:eisenstein-theta-holomorphic-projection-coefficient}
  c\Big( \pi^\hol_2\big(
  \sqrt{a}\, \theta^\ast_{a,\td\beta}(\tau) \cdot \theta_{a,\beta}(\tau)
  \big);\,
  n
  \Big)
\\
=\;
  -4 \pi
  \Big(
  \delta_{\td\beta \equiv 0 \,\pmod{a}}\,
  \sum_{\substack{m \equiv \beta \,\pmod{a}\\m \ne 0\\a n = m^2}}
  |m|
  \,+\,
  a n\hspace{-1em}
  \sum_{\substack{m \equiv \beta \,\pmod{a}\\\td{m} \equiv \td{\beta} \,\pmod{a}\\\td{m} \ne 0\\a n = m^2 - \td{m}^2}}
  \frac{1}{|m| + |\td{m}|}
  \Big)
\tx{.}
\end{multline}
We can drop the first contribution, since $-b \not\equiv 0 \,\pmod{a}$ and hence~$\td\beta \not\equiv 0 \,\pmod{a}$. It remains to analyze the second term on the right hand side.

When writing $an = m^2 - \td{m}^2 = (m + \td{m})(m - \td{m})$ we recognize that the summation runs over factorizations of~$an$. Assume that $an = d_1 d_2$ is a factorization corresponding to~$(m + \td{m})(m - \td{m})$, then $m = (d_1 + d_2) \slash 2$ and $\td{m} = (d_1 - d_2) \slash 2$. Since $an > 0$, we conclude that $d_1$ and~$d_2$ have the same sign. We treat only the positive case; the negative case yields the same sum and hence contributes an additional factor of~$2$ in~\eqref{eq:theta-theta-holomorphic-projection-coefficient-d12-factorizaton}.

Since~$d_1, d_2 > 0$, we have~$m > 0$. If we assume that~$a n$ is not a square, then~$\td{m} \ne 0$ and the sign of~$\td{m}$ is positive if~$d_1 > d_2$ and negative if~$d_2 > d_1$. As a result we find that $|m| + |\td{m}|$ equals the larger factor in~$d_1 d_2$. Summarizing, we have
\begin{align}
\nonumber
  a n\hspace{-1em}
  \sum_{\substack{m \equiv \beta \,\pmod{a}\\\td{m} \equiv \td{\beta} \,\pmod{a}\\\td{m} \ne 0\\a n = m^2 - \td{m}^2}}
  \frac{1}{|m| + |\td{m}|}
&\nonumber
\;=\;
  2
  \sum_{\substack{a n = d_1 d_2\\d_1, d_2 > 0\\d_1 \equiv \beta + \td\beta \,\pmod{a}\\d_2 \equiv \beta - \td\beta \,\pmod{a}}}
  \frac{d_1 d_2}{d_1 \delta_{d_1 > d_2} + d_2 \delta_{d_2 > d_1}}
\\
\label{eq:theta-theta-holomorphic-projection-coefficient-d12-factorizaton}
&\;=\;
  2
  \sum_{\substack{a n = d_1 d_2\\d_1, d_2 > 0\\d_1 \equiv \beta + \td\beta \,\pmod{a}\\d_2 \equiv \beta - \td\beta \,\pmod{a}}}
  \big( d_1 \delta_{d_1 < d_2} + d_2 \delta_{d_2 < d_1} \big)
\tx{.}
\end{align}
We next want to separate the archimedean and nonarchimedean conditions on the right hand side.

We write $n = n_a n'$, where $n'$ is either $pq$ or $p^2 q$ and where $p$ and $q$ are as in the statement of the proposition. Then for any factorization $n' = d'_1 d'_2$, we have \linebreak $d'_1 \slash d'_2 > a n_a$ or $d'_2 \slash d'_1 > a n_a$. This assumption ensures that in the resulting factorization of~$a n$ only the archimedean condition associated with the factorizaton~$n' = d'_1 d'_2$ plays a role. Summarizing, we have
\begin{gather*}
  \sum_{\substack{a n = d_1 d_2\\d_1, d_2 > 0\\d_1 \equiv \beta + \td\beta \,\pmod{a}\\d_2 \equiv \beta - \td\beta \,\pmod{a}}}
  \big( d_1 \delta_{d_1 < d_2} + d_2 \delta_{d_2 < d_1} \big)
\;=\;
  \sum_{\substack{a n_a n' = d_1 d_2\\d_1, d_2 > 0\\d_1 \equiv \beta + \td\beta \,\pmod{a}\\d_2 \equiv \beta - \td\beta \,\pmod{a}}}
  \big( d_1 \delta_{d'_1 < d'_2} + d_2 \delta_{d'_1 < d'_2} \big)
\tx{,}
\end{gather*}
where $d'_1 = \gcd(d_1,n')$ and $d'_2 = \gcd(d_2,n')$. Since~$\gcd(a n_a, n') = 1$, we can sum over two factorizations~$a n_a = d_{a,1} d_{a,2}$ and $n' = d'_1 d'_2$. Every factor of~$n'$ is congruent to $1$~modulo~$a$, so the congruence condition applies only to the factors of~$a n_a$. We have
{\allowdisplaybreaks
\begin{align}
\nonumber
&\hphantom{{}\;=\;{}}
  \sum_{\substack{a n_a n' = d_1 d_2\\d_1, d_2 > 0\\d_1 \equiv \beta + \td\beta \,\pmod{a}\\d_2 \equiv \beta - \td\beta \,\pmod{a}}}
  \big( d_1 \delta_{\gcd(d_1,n') < \gcd(d_2,n')} + d_2 \delta_{\gcd(d_1,n') < \gcd(d_2,n')} \big)
\\
\nonumber
&\;=\;
  \sum_{\substack{a n_a = d_{a,1} d_{a,2}\\n' = d'_1 d'_2\\d_{a,1}, d_{a,2}, d'_1, d'_2 > 0\\d_{a,1} \equiv \beta + \td\beta \,\pmod{a}\\d_{a,2} \equiv \beta - \td\beta \,\pmod{a}}}
  \big( d_{a,1} d'_1 \delta_{d'_1 < d'_2} + d_{a,2} d'_2 \delta_{d'_2 < d'_1} \big)
\\
\label{eq:theta-theta-holomorphic-projection-coefficient-uncoupled-nonachemedian-achemedian}
&\;=\;
  \Big(
  \sum_{\substack{a n_a = d_1 d_2\\d_1, d_2 > 0\\d_1 \equiv \beta + \td\beta \,\pmod{a}\\d_2 \equiv \beta - \td\beta \,\pmod{a}}}
  d_1 + d_2
  \Big)
  \,\cdot\,
  \Big(
  \sum_{\substack{n' = d_1 d_2\\d_1, d_2 > 0}}
  d_1 \delta_{d_1 < d_2} + d_2 \delta_{d_2 < d_1}
  \Big)
\tx{.}
\end{align}}
By Lemma \ref{la:nonarchimedian-contribution}, the first factor in~\eqref{eq:theta-theta-holomorphic-projection-coefficient-uncoupled-nonachemedian-achemedian} is congruent to~$2$ modulo~$\ell$. 

We next inspect the second factor in~\eqref{eq:theta-theta-holomorphic-projection-coefficient-uncoupled-nonachemedian-achemedian}. We have
\begin{gather}
\label{eq:theta-theta-holomorphic-projection-coefficient-archimedean-contribution}
\begin{aligned}
  \sum_{\substack{q p = d_1 d_2\\d_1, d_2 > 0}}
  \big( d_1 \delta_{d_1 < d_2} + d_2 \delta_{d_2 < d_1} \big)
&\;=\;
  (1 + 1)
  (1 + q)
=
  2(1+q)
\\
  \sum_{\substack{q p^2 = d_1 d_2\\d_1, d_2 > 0}}
  \big( d_1 \delta_{d_1 < d_2} + d_2 \delta_{d_2 < d_1} \big)
&\;=\;
  (1 + 1)
  (1 + q)
  +
  (p + p)
=
  2(1 + q + p)
\tx{.}
\end{aligned}
\end{gather}

We now combine the archimedean and nonarchimedean factors in~\eqref{eq:theta-theta-holomorphic-projection-coefficient-uncoupled-nonachemedian-achemedian} to determine the Fourier coefficients of~\eqref{eq:eisenstein-theta-holomorphic-projection}. Our final expression in~\eqref{eq:eisenstein-theta-holomorphic-projection-coefficient-final} receives several contributions:
$1 \slash 16 \pi$ from~\eqref{eq:eisenstein-theta-holomorphic-projection};
$-4 \pi$ from~\eqref{eq:eisenstein-theta-holomorphic-projection-coefficient};
$2$ from~\eqref{eq:theta-theta-holomorphic-projection-coefficient-d12-factorizaton};
$2$ from Lemma~\ref{la:nonarchimedian-contribution}, computing the first factor in~\eqref{eq:theta-theta-holomorphic-projection-coefficient-uncoupled-nonachemedian-achemedian};
and~$2(1+p)$ or~$2(1 + q + p)$ from~\eqref{eq:theta-theta-holomorphic-projection-coefficient-archimedean-contribution}, computing the second factor in~\eqref{eq:theta-theta-holomorphic-projection-coefficient-uncoupled-nonachemedian-achemedian}.
This yields 
\begin{gather}
\label{eq:eisenstein-theta-holomorphic-projection-coefficient-final}
\begin{alignedat}{3}
  c\big(
  \pi^\hol_2\big( \rmU_{a,b}\, E_{\frac{3}{2}} \cdot (\theta_{a,\beta} + \theta_{a,-\beta}) \big);\,
&
  n_a q p
  \big)
&&\;\equiv\;
  -2 ( 1 + q)
&&
  \quad\pmod{\ell}
\tx{,}
\\
  c\big(
  \pi^\hol_2\big( \rmU_{a,b}\, E_{\frac{3}{2}} \cdot (\theta_{a,\beta} + \theta_{a,-\beta}) \big);\,
&
  n_a q p^2
  \big)
&&\;\equiv\;
  -2 ( 1 + q + p)
&&
  \quad\pmod{\ell}
\tx{.}
\end{alignedat}
\end{gather}
\end{proof}

We are now in a position to apply Theorem~\ref{thm:serre} and its Corollary~\ref{cor:serre}.
\begin{proof}[{Proof of Theorem~\ref{mainthm:hurwitz-congruences}}]

We are now in position to apply Theorem~\ref{thm:serre} to deduce a contradiction. Replacing $a$ and $b$ with $a a'$ and $b - a b'$, where $a'$ and $b'$ are as in Lemma~\ref{la:subprogression}, we may assume that $a$ and $b$ satisfy the conditions of Proposition \ref{prop:coefficientformula}.

Theorem~\ref{thm:serre} asserts that there are infinitely many primes~$p \equiv 1 \,\pmod{a \ell}$ such that $c(f;\,np^r) \equiv (r + 1) c(f;\,n) \,\pmod{\ell}$ for all $n \in \ZZ$ co-prime to~$p$, $r \in \ZZ_{\ge 0}$, and \linebreak $f \in \rmM_2(\Ga_1(4a))$. For sufficiently large $p$ there is a prime~$q$ with~$q \not\equiv 1 \,\pmod{\ell}$ and $q \equiv 1 \,\pmod{a}$ that satisfies~\eqref{eq:eisenstein-theta-holomorphic-projection-coefficient-prime-condition} (see Remark~\ref{rm:prop:coefficientformula}).

If the $q$\thdash\ Fourier coefficient of~$\pi^\hol_2\big( \rmU_{a,b}\, E_{\frac{3}{2}} \cdot (\theta_{a,\beta} + \theta_{a,-\beta}) \big)$ is divisible by~$\ell$, then the two congruences in~\eqref{eq:eisenstein-theta-holomorphic-projection-coefficient-final} yield the contradiction~$1 + q \equiv 1 + q + p \equiv 0 \,\pmod{\ell}$. Otherwise, they incur the relation
\begin{gather*}
  3 (1 + q)
\equiv
  2 (1 + q + p)
  \;\pmod{\ell}
\tx{,}
\end{gather*}
which is equivalent to~$q \equiv 1 \,\pmod{\ell}$, a contradiction. 
\end{proof}

\begin{appendix}

\section{Hecke-type congruences}
\label{sec:class-number-formula}

The fact that~$\sum_D H(D) e(D \tau)$ is a Hecke eigenform is conveniently captured by the Hurwitz class number formula. For fundamental discriminants~$-D$ and positive integers~$f$, the formulas for class numbers of imaginary quadratic fields and~$H(D)$ (see for example, pages~228 and~230 of~\cite{cohen-1993}) imply the following:
\begin{gather}
\label{eq:class-number-formula}
  H(D f^2)
\;=\;
  H(D)
  \frac{w(-D f^2)}{w(-D)}\,
  \sum_{d \isdiv f}
  d
  \prod_{p \isdiv d}
  \Big( 1 - \mfrac{1}{p}\left( \mfrac{-D}{p} \right) \Big)
\tx{,}
\end{gather}
where the product is over primes~$p$ dividing~$d$ and where~$w(-D) \isdiv 6$ is the number of roots of unity in the quadratic order of discriminant~$-D$. Throughout the paper we follow Zagier~\cite{zagier-1975}, who defines~$H(D)$ for nonnegative arguments, while Cohen~\cite{cohen-1993} uses the opposite sign convention. We restrict to congruences modulo powers of primes~$\ell > 3$ and hence may ignore the factor~$w(-D f^2) \slash w(-D)$. We assume no further knowledge of~$H(D)$, and we only employ the sum over divisors~$d$ of~$f$ in \eqref{eq:class-number-formula} to obtain congruences for~$H(a n + b)$. Note that this sum is multiplicative in~$f$, which later allows us to restrict to the case of prime powers~$a$.

From the introduction recall the congruences 
\begin{gather*}
  H(5^3 n + 25) \equiv 0 \;\pmod{5}
\tx{,}\quad
  H(7^3 n + 147) \equiv 0 \;\pmod{7}
\tx{,}\quad
  H(11^3 n + 242) \equiv 0 \;\pmod{11}
\text{.}
\end{gather*}
Observe that for $5^3 n + 25 = (5 n + 1) 5^2 = D f^2$ with a fundamental discriminant~$-D$ we have $D \equiv 1\, \pmod{5}$ and~$5 \isdiv f$. Thus, writing $f_5 \isdiv f$ for the highest power of~$5$ that divides $f$, we have $f_5 \ne 1$ and the right hand side of~\eqref{eq:class-number-formula} has the factor
\begin{gather*}
  \sum_{d \isdiv f_5}
  d 
  \prod_{5 \isdiv d}
  \Big( 1 - \mfrac{1}{5} \left( \mfrac{-D}{5} \right) \Big)
\equiv
  1
  \,-\,
  \left( \mfrac{-D}{5} \right)
\equiv
  0
  \;\pmod{5}
\tx{,}
\end{gather*}
which shows that $ H(5^3 n + 25) \equiv 0 \;\pmod{5}$.  The congruences modulo~$7$ and~$11$, and the congruences in the remark after Theorem~\ref{mainthm:hurwitz-congruences} follow similarly.

The above reasoning extends to powers~$\ell^m$ for arbitrary primes~$\ell$ as follows:  Assume that $a = \ell^e$ and $b = \ell^c u$ for an integer~$u$ with~$\gcd(\ell,u) = 1$. For simplicity, we further suppose that~$e > c \ge 2$. Set~$c' = \lfloor c \slash 2 \rfloor \ge 1$ and~$c'' = \min\{c',m\}$. We have the factorization
\begin{gather*}
  an + b
=
  \ell^e n + \ell^c u
=
  \ell^{2 c'}\, \ell^{c - 2c'}
  \big( \ell^{e - c} + u \big)
=
  f^2 D
\tx{,}
\end{gather*}
which yields~$\ell^{c'} \isdiv f$ and~$D \equiv 0 \,\pmod{\ell}$ if~$c$ is odd and~$D \equiv u \,\pmod{\ell}$ if~$c$ is even.
Thus, the right hand side of~\eqref{eq:class-number-formula} has the factor
\begin{multline*}
  \sum_{d \isdiv \ell^{c'}}
  d
  \prod_{\ell \isdiv d}
  \Big( 1 - \mfrac{1}{\ell} \left( \mfrac{-D}{\ell} \right) \Big)
=
  1
  \,+\,
  \sum_{n = 1}^{c'}
  \ell^n
  \Big( 1 - \mfrac{1}{\ell} \left( \mfrac{-D}{\ell} \right) \Big)
\equiv
  1
  +
  \sum_{n = 1}^{c''}
  \ell^{n-1}
  \Big( \ell - \left( \mfrac{-D}{\ell} \right) \Big)
\\
=
  \sigma_1\big( \ell^{c''-1}  \big)
  \Big( 1 - \left( \mfrac{-D}{\ell} \right) \Big)
  +
  \ell^{c''}
  \;\pmod{\ell^m}
\tx{.}
\end{multline*}
As a result, we find congruences modulo~$\ell^m$ if~$c \ge 2 m$ (and hence~$c'' \ge m$) is even and $-u$ is a square modulo~$\ell$. In particular, we obtain non-holomorphic Ramanujan-type congruences for all primes~$\ell > 3$.
\end{appendix}

\ifbool{nobiblatex}{%
  \bibliographystyle{alpha}%
  \bibliography{bibliography.bib}%
}{%
  \renewcommand{\baselinestretch}{.8}
  \Needspace*{4em}
  \begin{multicols}{2}
  \printbibliography[heading=none]%
  \end{multicols}
}

\Needspace*{3\baselineskip}
\noindent
\rule{\textwidth}{0.15em}

{\noindent\small
Olivia Beckwith\\
Department of Mathematics,
University of Illinois at Urbana-Champaign,
Urbana, IL 61801, USA\\
E-mail: \url{obeckwith@gmail.com}\\
Homepage: \url{https://sites.google.com/view/olivia-beckwith-homepage/home}
}\vspace{.5\baselineskip}

{\noindent\small
Martin Raum\\
Chalmers tekniska högskola och G\"oteborgs Universitet,
Institutionen för Matematiska vetenskaper,
SE-412 96 Göteborg, Sweden\\
E-mail: \url{martin@raum-brothers.eu}\\
Homepage: \url{http://raum-brothers.eu/martin}
}\vspace{.5\baselineskip}

{\noindent\small
Olav K. Richter\\
Department of Mathematics,
University of North Texas,
Denton, TX 76203, USA\\
E-mail: \url{richter@unt.edu}\\
Homepage: \url{http://www.math.unt.edu/~richter/}
}

\end{document}

